\newtheorem{proposition}{Proposition}[section]
\begin{document}

\title{Towards an Efficient Shifted Cholesky~QR\\for Applications in Model Order Reduction using pyMOR}
\shorttitle{Towards Efficient Shifted Cholesky~QR for {\pymor}}
\shortauthor{M.~Bindhak, A.~J.~R.~Pelling, J.~Saak}
%% Information for the first author.
\author[$\ast$, \faEnvelopeO]{Maximilian Bindhak~\orcidlink{0009-0005-4627-9574}}%chktex 8
\author[$\dagger$]{Art~J.~R.~Pelling~\orcidlink{0000-0003-3228-6069}}%chktex 8
\author[$\ast$]{Jens Saak~\orcidlink{0000-0001-5567-9637}}%chktex 8

\affil[$\ast$]{%
  Computational Methods in Systems and Control Theory,\authorcr%
  Max Planck Institute for Dynamics of Complex Technical Systems,\authorcr%
  Magdeburg, Germany}
\affil[$\dagger$]{
  Department of Engineering Acoustics,\authorcr%
  Technische Universit\"at Berlin, Germany}
\affil[\faEnvelopeO]{Corresponding author, \protect\email{bindhak@mpi-magdeburg.mpg.de}}

%%    Abstract is required.
\abstract{%
    Many model order reduction (MOR) methods rely on the computation of an orthonormal
    basis of a subspace onto which the large full order model is
    projected. Numerically, this entails the orthogonalization of a set
    of vectors. The nature of the MOR process imposes several requirements
    for the orthogonalization process. Firstly, MOR is oftentimes
    performed in an adaptive or iterative manner, where the quality of the
    reduced order model, i.e., the dimension of the reduced subspace, is
    decided on the fly. Therefore, it is important that the
    orthogonalization routine can be executed iteratively. Secondly, one
    possibly has to deal with high-dimensional arrays of abstract vectors that
    do not allow explicit access to entries, making it
    difficult to employ so-called `orthogonal triangularization
    algorithms' such as Householder~QR.

    For these reasons, (modified) Gram-Schmidt-type algorithms are
    commonly used in MOR applications. These methods belong to the category of
    `triangular orthogonalization' algorithms that do not rely on
    elementwise access to the vectors and can be easily
    updated. Recently, algorithms like shifted Cholesky~QR have gained
    attention. These also belong to the aforementioned category and have proven
    their aptitude for MOR algorithms in previous studies. A key benefit
    of these methods is that they are communication-avoiding, leading to
    vastly superior performance on memory-bandwidth-limited problems and
    parallel or distributed architectures. This work formulates an
    efficient updating scheme for Cholesky~QR algorithms and proposes
    an improved shifting strategy for highly ill-conditioned matrices.

The proposed algorithmic extensions are validated with numerical experiments on a laptop and computation server.
}
\novelty{Our contribution in this work is two-fold.
  \begin{itemize}
  \item We propose some algorithmic changes to the shifted
    Cholesky~QR algorithm which imrpove its numerical robustness and
    performance in certain contexts.
  \item We execute a thorough
    performance comparison in a situation where only vector-wise access to the
    matrix columns is allowed.
  \end{itemize}
}
\keywords{Cholesky decomposition, QR decomposition, Cholesky~QR, shifted
  Cholesky~QR, pyMOR, benchmark, performance comparison}
\msc{65F25, 15A23, 15A12, 65F35, 68Q25, 65Y20}
\maketitle % Produces the title.

\section{Introduction}%
\label{sec:introduction}
Mathematical modeling of complex dynamical systems oftentimes produces very large models that can become numerically infeasible in practice. The endeavor of finding smaller, and therefore more efficient, surrogate models while retaining the most important dynamics of the system is known as model order reduction (MOR)~\cite{antoulas2005,benner2017,morBenGQetal21,morBenGQetal21a,morBenGQetal21b}. The majority of MOR methods achieve the reduction of model size by means of projection. Hereby, the degrees of freedom (DOFs) of the full order model (FOM) are projected onto a subspace of smaller dimension. The main challenge in MOR is finding a subspace that is small, yet leads to accurate reduced order models (ROMs). These subspaces are usually represented by bases of orthonormal vectors that are constructed during the MOR process. Therefore, orthogonalization routines are required in many MOR tasks such as moment matching (MM)~\cite{morGri97,morFre03}, the iterative restarted Krylov algorithm (IRKA)~\cite{gugercin2008}, proper orthogonal decomposition (POD)~\cite{morSir87} and also recent data-driven MOR methods such as the randomized Eigensystem Realization algorithm (rERA)~\cite{minster2021}.

The landscape of orthogonalization methods is vast and growing with many recent developments~\cite{Fuk19,CarLR21,carson2022,CarLMetal25}. Following the conceptualization in~\cite{trefethen1997,fukaya2020}, orthogonalization algorithms can be divided into so-called \emph{orthogonal triangularization} algorithms that successively left-multiply orthogonal matrices until a triangular matrix is obtained, and \emph{triangular orthogonalization} algorithms that multiply triangular matrices from the right until an orthogonal matrix is obtained. The former \emph{orthogonal triangularization} methods include all Householder-type QR algorithms~\cite{bischof1987,schreiber1989}, whereas Gram-Schmidt-type methods, and recently also Cholesky~QR-type~\cite{fukaya2014,fukaya2020,fukaya2024} algorithms, belong to the latter class of \emph{triangular orthogonalization} methods.

Even though there are plenty of orthogonalization methods to choose from, the application at hand imposes constraints on the choice of method. In this work, we will look through the lens of MOR applications, specifically using the free and open source Python software library \pymor{}~\cite{milk2016,pymor}. A special requirement of the orthogonalization methods in \pymor{} is the abstract operator and \texttt{VectorArray} interface. The key idea is that the MOR process is formulated in an abstract manner whereby the construction and solution of FOMs is offloaded to established software libraries.

On the one hand, this abstraction enables easy integration of almost any arbitrary solver for partial differential equations (PDEs) because boilerplate code is largely avoided, increasing the applicability of the algorithms. Further, it allows MOR routines to be implemented such that they are independent of the concrete PDE problem and solver. Thus, the maintenance effort for the MOR code is reduced drastically and the integration of new algorithms by non-experts regarding the specific PDE solvers is straightforward. This gives maximum flexibility to try the methods on a whole range of problems.
On the other hand, tall and skinny matrices become mere arrays of vectors that reside in the PDE solver's memory space and their concrete realization may be such that access to single degrees of freedom (DOFs) is either very expensive or even impossible. This is especially true when the PDE solver is run in distributed parallel mode and significant bookkeeping is required to identify on which of the processes, i.e., involved hosts, the required DOFs reside.

In effect, this introduces several numerical and algorithmic restrictions when dealing with large-scale problems: As the software interface has to accommodate a wide range of backends, only entire FOM solutions can be queried and single DOFs generally cannot be accessed. In consequence, \emph{orthogonal triangularization} algorithms such as Householder-type methods can mostly be ruled out for MOR applications in \pymor{}, as DOF-access is needed to construct the Householder reflectors. Also, the communication-avoiding so-called tall and skinny QR decomposition~(TS-QR)~\cite{DemGHetal12}\footnote{An extensive comparison between TS-QR with the repeated Cholesky QR was done in~\cite{fukaya2014}, which shows significant speedups for the latter on parallel distributed systems.}, which splits the tall and skinny matrix into blocks vertically, requires DOF-access to achieve this splitting, hence cannot be implemented in terms of the present abstraction. This leaves the class of \emph{triangular orthogonalization} methods such as Gram-Schmidt type algorithms and the Cholesky~QR variants which will be considered in this work.

In a nutshell, the basic Cholesky~QR algorithm relies on the observation that
the QR decomposition of a matrix \(A\) can be computed from a decomposition of
its Gramian \(X=A\tra A\). Namely, from a Cholesky factor \(R\) of the Gramian \(X=R\tra R\), where \(R\) is upper triangular, the QR decomposition \(A=QR\) can be obtained by \(Q=AR\inverse\). Of course, this simple relationship only holds in exact arithmetic and this basic algorithm is unstable in practice. Several algorithmic extensions \cite{fukaya2014,fukaya2020,fukaya2024} of the basic algorithm have been proposed recently that solve the issue of numerical instability.

For QR decompositions in MOR applications, we can generally distinguish two modes of operation:
\begin{enumerate}[{M\,}1]
\item In algorithms like IRKA~\cite{gugercin2008}, or when truncating the rank of the system Gramians in balancing-based MOR~\cite{Saa09}, one obtains the entire non-orthonormal bases and run a full QR sweep all-at-once. Here, comparably many columns at a time are being orthonormalized and approaches based on Cholesky~QR are well-suited.\label{mode_one}
  %% \APcomment{This sentence did not really fit in, maybe we can weave it in here?}
%% Also, for balancing based MOR Gramian matrices \(G\) are computed, and in the large-scale case usually approximated in the form \(G\approx ZZ\tra\) or \(G\approx LDL\tra\), for tall and skinny matrices \(Z\), or \(L\). Then, rank truncation involving QR decompositions of \(Z\), or \(L\), respectively, ensures performance of the reduction process.
\item In adaptive approaches, such as MM with a posteriori error estimates~\cite{morChe22}, adaptive data-driven modeling~\cite{pelling2025}, or in POD-Greedy approaches, like the reduced basis (RB) method~\cite{morBenGQetal21a}, we successively increase the number of basis vectors. That means, one extends an existing orthonormal basis by one or more new vectors that still need orthonormalization. In other words, one performs a QR update. Especially in POD/RB-like approaches, it then often happens that less and less of the new vectors are actually sufficiently linearly independent of the existing basis, i.e., the condition number of the full matrix of basis vectors becomes increasingly large. Usually, Gram-Schmidt-type approaches are then used, as they allow detection of the linear dependence and vectors can be dropped accordingly. However, this detection may only happen after comparison to many previously computed orthonormal vectors, causing significant communication (memory movements) overhead. In this work, we argue that Cholesky~QR-type algorithms can be computationally advantageous in this mode of operation as well.\label{mode_two}
\end{enumerate}

In both cases, the classic Cholesky~QR algorithm~\cite[Theorem 5.2.3]{GolV13} suffers from a potential linear dependence of basis vectors, as the computation of the Gramian squares the conditioning of the problem and very ill-conditioned matrices can enter the Cholesky decomposition. The shifted Cholesky~QR~\cite{fukaya2014,fukaya2020} is a remedy to this issue, but it can become slow when the shift needs to be repeated very often or, as demonstrated later, might outright fail. Moreover, in \pymor's abstraction, the Gramian computation can be executed as a BLAS level-3-type GEMM operation only for \texttt{NumpyVectorArrays}\footnote{\url{https://docs.pymor.org/2024-2-0/autoapi/pymor/vectorarrays/numpy/index.html}} and is limited to a single inner product of two vectors, i.e., strictly BLAS level-1 per entry of the Gramian, for generic \texttt{VectorArray}s such as \texttt{ListVectorArray}\footnote{\url{https://docs.pymor.org/2024-2-0/autoapi/pymor/vectorarrays/list/index.html}} and others.

In this contribution, we tackle both issues. To this end, we suggest three algorithmic extensions in~\Cref{sec:extensions}. An evaluation of the required number of floating point operations of the corresponding algorithms can be found in \Cref{cha:flopCount}. \Cref{sec:benchmarks} presents an extensive performance evaluation benchmarking the algorithms with respect to execution time and several accuracy measures. This illustrates the numerical robustness of the newly proposed algorithmic extensions for a variety of input matrices.
In \Cref{cha:outlook}, we conclude the paper and discuss possible further extensions and investigations.
%
%% Local Variables:
%% jinx-languages: "en_US"
%% End:

\section{Extensions of the Shifted Cholesky~QR Algorithm}%
\label{sec:extensions}
Above, we have identified two main issues with the shifted Cholesky~QR. In this
section, we provide a remedy for the potential breakdowns or expensive inner
loops in the presence of linearly dependent vectors, due to ill-conditioned or very large
Gramians $X=A\tra A$. We address the ill-conditioning by adapting the shift in \Cref{sec:recomp}.

As mode M\,\ref{mode_two} mostly relies on QR updates, we propose an efficient
updating scheme for ``Cholesky~QR''-type decompositions. This is finally used in
\Cref{sec:panel_scheme} to devise a panel scheme that successively computes
updated QR decompositions with a block of columns added in each step. This
improves the operations count and the memory locality of the procedure when
using {\pymor}'s \texttt{VectorArray} abstraction and ultimately limits the
dimensions of the Gramian.
\subsection{Shift recomputation}%
\label{sec:recomp}%
As is supported by the numerical experiments in \Cref{cha:bench_chol,cha:bench_errors}, we now propose a more numerically robust variation of the \emph{`Iterated Cholesky QR for
X = QR with shifts when necessary'} algorithm presented in~\cite[Algorithm 4.1]{fukaya2020}\footnote{Note that the naming scheme for $X$ and $A$ in~\cite{fukaya2020} is exactly opposite to the scheme used in this paper.} (\isCholQR).
Experiments have shown that, in very ill-conditioned cases, the {\isCholQR} does not converge to an orthogonal $Q$ because the initial shift estimation based on \(\norm{A}^2\) is too large by orders of magnitude. This large shift is then reapplied in successive iterations, ultimately producing a non-orthogonal \(Q\).
A straightforward solution to this problem is to simply recompute the shift according to the current approximation.
To this end, we present the  \emph{`Iterated Cholesky~QR with shift recomputation'}
(\rsCholQR) in \Cref{algo:cholqr} that, rather than reapplying the initial shift, recomputes the shift whenever the Cholesky decomposition of the current Gramian iterate \(X\) fails. Since $\norm{X}=\norm{A}^2$, we propose to compute the shift based on \(\norm{X}\). Not only does this solve the problem of a non-orthogonal \(Q\) because the shift matches the current iterate, it can further be numerically more efficient: Despite performing shift computations repeatedly instead of only once, multiple computations of \(\norm{X}\) can be less expensive than \(\norm{A}^2\), depending on the dimension of \(A\) and number of required iterations. Furthermore, one can utilize symmetric eigenvalue solvers, like the Lanzcos method~\cite{lanczos50} or implicitly restarted Lanczos method (IRLM)~\cite{sorensen97}, to compute only the largest eigenvalue of the Gramian \(X\) because it is symmetric/hermitian.
{
    % RENAME MATRICES ONLY LOCALLY
    \RenewDocumentCommand{\VA}{}{\ensuremath{A}}           % regular input A for input in non-update algos
    \RenewDocumentCommand{\VQ}{}{\ensuremath{Q}}           % regular Q for iterations and output
    \RenewDocumentCommand{\VR}{}{\ensuremath{R}}           % regular R (R = Rup*R)
    \RenewDocumentCommand{\VRup}{}{\ensuremath{\tilde{R}}} % R for update (Rup = chol(X))
    \begin{algorithm}[th]
    \caption{(\rsCholQR): Iterated Cholesky QR with shift recomputation.\label{algo:cholqr}}
        \begin{algorithmic}[1]
            \Require{$\VA\in\R[m][n]$, unit roundoff $\uro>0$.}
            \Ensure{$\VQ\in\R[m][n],\VR\in\R[n][n]$}
            \State $\VQ \gets \VA$
            \State $\VR \gets I_n$
            \State $\VX \gets \VQ\tra \VQ$
            \While {$\fnorm{\smash{\VX-I_n}}\geq\uro\sqrt{n}$}
                \If {$\VRup \gets \chol(\VX)$ breaks down}
                    \State $\sigma \gets \max\left\{11(mn+n(n+1))\uro\norm{\VX},2\uro\right\}$\label{alg:cholqr:six} \Comment{Recompute shift.}
                    \State $\VRup \gets \chol(\VX + \sigma I_n)$
                \EndIf
            \State $\VQ \gets \VQ\VRup\inverse$
            \State $\VR \gets \VRup\VR$
            \State $\VX \gets \VQ\tra\VQ$
            \EndWhile
        \end{algorithmic}
    \end{algorithm}
}

It is important to note that, in certain instances, it is possible that the shift becomes insignificantly small in later iterations and the proposed algorithm ends up in a quasi-infinite loop. As a remedy, it is advised to limit the magnitude of the shift by twice the unit round-off from below (Line~\ref{alg:cholqr:six} in \Cref{algo:cholqr}).

In order to get an impression of possible savings from this extension, as well as the extensions introduced in the following subsections, we collect the required floating
point operations (flops) in \Cref{tab:rsCholQRFlop}, found in \Cref{cha:flopCount}. Note that especially the \gemm{}\footnote{Here, {\gemm} refers to a general matrix-matrix product from {\blas}, which is usually associated with a highly parallelizable operation. More operations in {\blas}/{\lapack} notation used in this paper can be found in \Cref{tab:routinesFlop}.} operations for forming the Gramians are not memory traffic optimal due to the {\pymor} abstraction, such
that direct comparison with NumPy's \ndarray{} will favor the latter due to its better memory management.
\subsection{Updating the decomposition}%
\label{sec:cholqrupdate}
As mentioned in \Cref{sec:introduction}, many adaptive MOR tasks require a
repeated orthonormalization to extend an existing orthonormal basis with new
vectors (operation mode M\,\ref{mode_two}). To this end, we formulate a
QR-updating scheme for the (shifted) Cholesky~QR algorithm in
\Cref{algo:cholqrupdate}. The core algorithm was recently presented for a
data-driven MOR application by the second author in~\cite[Appendix
A]{pelling2025}. In this work, it is analyzed indirectly via the panel variant introduced in \Cref{sec:panel_scheme} with respect to its numerical
behavior and careful benchmarking against the existing approaches applicable in {\pymor} is undertaken.

Suppose we have an existing QR decomposition \[\VQin\VRin=\VAprev\in\R[m][q]\] and
are looking to obtain a QR decomposition for an updated matrix
$\VA=\begin{bmatrix}\VAprev & \VAin\end{bmatrix}$, i.e., find \(\VQ,\VB,\VR\) such that \[\VQout=\begin{bmatrix}\VQin & \VQ\end{bmatrix}\in\R[m][(q+p)]\] and \[\VRout=\begin{bmatrix}\VRin & \VB\\0&\VR\end{bmatrix}\in\R[(q+p)][(q+p)].\] $\VAin\in\R[m][p]$ denotes the new columns that have been appended to
$\VAprev$, where \(\VQ\) is its orthogonalization with respect to \(\VQin\). The following proposition reveals how an updated QR decomposition can be obtained from a Cholesky approach:
\begin{proposition}
    Let $\VQin\in\R[m][q]$, $\VRin\in\R[q][q]$ be a QR decomposition $\VQin\VRin=\VAprev\in\R[m][q]$ and let $\VR$ be given by the Cholesky factorization
    \begin{align*}
         \VR\tra \VR&=(\VAin -\VQin\VQin\tra \VAin)\tra(\VAin-\VQin\VQin\tra\VAin)\\
         &=\VAin\tra \VAin-\VAin\tra \VQin\VQin\tra \VAin = X
    \end{align*} for some $\VAin\in\R[m][p]$. Then,
    \begin{align*}
        \VQout&=\begin{bmatrix}
            \VQin&(\VAin-\VQin\VQin\tra \VAin)\VR\inverse
        \end{bmatrix}\\\intertext{and}
        \VRout&=
        \begin{bmatrix}
            \VRin&\VQin\tra \VAin\\0&\VR
        \end{bmatrix}
    \end{align*}
    is a QR decomposition of $\VA=\begin{bmatrix}\VAprev & \VAin\end{bmatrix}\in\R[m][(q+p)]$.
\end{proposition}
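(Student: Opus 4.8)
The plan is to verify directly that the pair $(\VQout,\VRout)$ proposed in the statement meets the two defining conditions of a QR decomposition of $\VA$: the factorization identity $\VQout\VRout=\VA$, and the structural conditions that $\VQout$ has orthonormal columns and $\VRout$ is upper triangular. Before computing, I would record the standing facts. Since $\VQin\VRin$ is a QR decomposition, $\VQin\tra\VQin=I_q$ and $\VRin$ is upper triangular, so $P:=\VQin\VQin\tra$ is the orthogonal projector onto the column space of $\VQin$: it is symmetric and satisfies $P^2=\VQin(\VQin\tra\VQin)\VQin\tra=P$, hence $I_m-P$ is also symmetric and idempotent, with $(I_m-P)\tra(I_m-P)=I_m-P$. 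In this notation the matrix defining $\VR$ is $((I_m-P)\VAin)\tra((I_m-P)\VAin)=\VAin\tra(I_m-P)\VAin=\VAin\tra\VAin-\VAin\tra\VQin\VQin\tra\VAin$, matching the hypothesis, and I will abbreviate the new block of $\VQout$ by $\VQ:=(\VAin-\VQin\VQin\tra\VAin)\VR\inverse=(I_m-P)\VAin\,\VR\inverse$.

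First I would check the factorization identity by block multiplication:
\[
  \VQout\VRout
  = \begin{bmatrix}\VQin & \VQ\end{bmatrix}
    \begin{bmatrix}\VRin & \VQin\tra\VAin\\ 0 & \VR\end{bmatrix}
  = \begin{bmatrix}\VQin\VRin & \VQin\VQin\tra\VAin + \VQ\VR\end{bmatrix}.
\]
The first block equals $\VAprev$ by hypothesis. For the second block, $\VQ\VR=(I_m-P)\VAin$, so $\VQin\VQin\tra\VAin+\VQ\VR=P\VAin+(I_m-P)\VAin=\VAin$; hence $\VQout\VRout=\begin{bmatrix}\VAprev & \VAin\end{bmatrix}=\VA$.

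Next I would verify orthonormality of $\VQout$ by evaluating the four blocks of $\VQout\tra\VQout$. The $(1,1)$ block is $\VQin\tra\VQin=I_q$. The two off-diagonal blocks vanish because $\VQin\tra(I_m-P)=\VQin\tra-(\VQin\tra\VQin)\VQin\tra=0$, which gives $\VQin\tra\VQ=\VQin\tra(I_m-P)\VAin\,\VR\inverse=0$ and, transposing, $\VQ\tra\VQin=0$. The $(2,2)$ block is $\VQ\tra\VQ=(\VR\tra)\inverse((I_m-P)\VAin)\tra(I_m-P)\VAin\,\VR\inverse=(\VR\tra)\inverse(\VR\tra\VR)\VR\inverse=I_p$, directly from the Cholesky definition of $\VR$. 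Thus $\VQout\tra\VQout=I_{q+p}$. Finally, $\VRout$ is block upper triangular with the upper-triangular factors $\VRin$ and $\VR$ on its diagonal and a zero $(2,1)$ block, so $\VRout$ is upper triangular, which finishes the verification.

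The computation is routine, so there is no real obstacle in the proof itself; the one point I would flag is that the whole argument presupposes $\VR$ to be nonsingular, equivalently that $X$ is symmetric positive definite. This is exactly the condition that $(I_m-P)\VAin$ has full column rank $p$, i.e., that the appended columns $\VAin$ are linearly independent of one another and of the existing basis $\VQin$; when it fails, the Cholesky factorization in the hypothesis breaks down or returns a singular $\VR$, and one falls back on dropping columns or on the shifted variant of the algorithm. I would state this as the natural hypothesis under which the proposition holds rather than as a gap in the proof.
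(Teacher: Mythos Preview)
Your proof is correct and follows essentially the same route as the paper: a direct block verification that $\VQout\VRout=\VA$, that $\VRout$ is upper triangular, and that $\VQout\tra\VQout=I_{q+p}$ via $\VQin\tra\VQ=0$ and $\VQ\tra\VQ=I_p$. Your use of the projector $P=\VQin\VQin\tra$ is a mild notational convenience, and your closing remark about the implicit full-rank assumption on $(I_m-P)\VAin$ is a welcome clarification that the paper discusses informally just after the proof.
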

\begin{proof}
Firstly, we have
{\footnotesize%
\begin{align*}
  \VQout\VRout&=\begin{bmatrix}
    \VQin\VRin & \VQin\VQin\tra \VAin + (\VAin -\VQin\VQin\tra \VAin)\VR\inverse\VR
  \end{bmatrix}\\&=\begin{bmatrix}
    \VAprev&\VAin
  \end{bmatrix}=\VA.
\end{align*}
}%
Secondly, $\VRout$ is upper triangular by construction.

It remains to verify that $\VQout$ is orthogonal. Let \[\VQ = (\VAin-\VQin\VQin\tra \VAin)\VR\inverse,\] such that \[\VQout=\begin{bmatrix}\VQin&(\VAin-\VQin\VQin\tra \VAin)\VR\inverse\end{bmatrix}.\] Then,
\begin{align*}
  \VQin\tra\VQ &= \VQin\tra(\VAin-\VQin\VQin\tra \VAin)\VR\inverse\\
  &= (\VQin\tra \VAin - \VQin\tra \VAin)\VR\inverse = 0,
\end{align*}
and
\begin{align*}
    \hspace*{-4em}\VQ\tra\VQ &= \VR\trainv(\VAin- \VQin\VQin\tra \VAin)\tra (\VAin-\VQin\VQin\tra \VAin)\VR\inverse\\
    &= \VR\trainv(\VR\tra\VR)\VR\inverse = I_{p}.
\end{align*}
Thus, \(\VQout\tra\VQout=I_{q+p}\).
\end{proof}
The computation of the Cholesky factor $\VR$ can be stabilized by applying shifts whenever it breaks down, just as it is done in \Cref{algo:cholqr}.
Multiple iterations might be necessary to reach the desired accuracy.
However, also breakdowns can happen, although they are lucky breakdowns, as we
will explain in the following. To this end, we consider two columns \(a_{i}\) and
\(c_{j}\) of \(\VAin\) with \(a_{i}\) being a linear combination of vectors in
\(\VQin\), i.e., \(a_{i}=\VQin b_{i}\) for some \(b_{i}\in\R[q]\) and \(c_{j}\)
orthogonal on \(\VQin\), i.e., \(\VQ\tra c_{j}=0\). Then, we have
\begin{align*}
  X_{i,i} &= a_i\tra a_i - a_i\tra \VQin \VQin\tra a_i\\
  &= b_i\tra \VQin\tra \VQin b_i - b_i\tra b_i = 0.
\end{align*}
Clearly, this leads to a rank deficiency in the Gramian (as Sylvester's criterion is
violated, and definiteness is lost) and the Cholesky
decomposition will fail. However, in MOR, one is usually interested in a rank
revealing QR decomposition and one could simply proceed with the reduced Gramian
formed without \(a_{i}\).
On the other hand,
\begin{align*}
  X_{j,j} &= c_{j}\tra c_{j} - \underset{=0}{\underbrace{c_{j}\tra\VQin\VQin\tra c_{j}}}\\
         &=  c_{j}\tra c_{j}.
\end{align*}

In practice, obviously anything in between can happen and \(X\) can be
arbitrarily ill-conditioned. To avoid issues with the Cholesky decomposition of \(\VX\), we resort to shifting in this contribution. However, a suitable combination of
shifting and a column pivoting strategy along the lines of~\cite{fukaya2024}
could potentially yield even better results.

The considerations made above also
apply to the panel scheme devised in the next section when successively updating
existing QR decompositions. Especially there, the initial shift is often by
magnitudes to small. Our current workaround is to increase it by a factor of \(10\)
until the shifted Cholesky decomposition succeeds. We have experimented with
different strategies, but have not found any that has similar robustness at lower
computational costs, yet. In any case, we have observed runtime improvements
even with this rather brute force approach, as reported in
\Cref{sec:benchmarks} and~\cite{pelling2025}.

\begin{algorithm}[th]
    \caption{(\cholQRUpdate):\label{algo:cholqrupdate}}
    \begin{algorithmic}[1]
        \Require{Existing QR decomposition $\VQin\in\R[m][q]$, $\VRin\in\R[q][q]$, new columns
          $\VAin\in\R[m][p]$, unit roundoff $\uro>0$.}
        \Ensure{$\VQout\in\R[m][n]$, $\VRout\in\R[n][n]$ that satisfies
          $q+p=n$ and $\VQout\VRout=\begin{bmatrix}\VQin\VRin & \VAin\end{bmatrix} $.}
        \State $\VQ \gets \VAin$
        \State $\VB \gets 0\in\R[q][p]$
        \State $\VR \gets I_p$
        \State $\VBup \gets \VQin\tra\VQ$
        \State $\VX \gets \VQ\tra\VQ - \VBup\tra\VBup$
        \While {$\fnorm{\smash{\VX - I_p}} \geq \uro\sqrt{p}$}
            \State $\sigma \gets 0$
            \While {$\VRup \gets \chol(\VX + \sigma\cdot I_p)$ breaks down}
              \State $\sigma \gets \begin{cases}11(mq+q(q+1))\uro\norm{\VX}&\sigma=0\\ 10\cdot\sigma&\textrm{else}\end{cases}$
              \State $\sigma \gets \max\left\{\sigma,2\uro\right\}$
            \EndWhile
            \State $\VQ \gets (\VQ - \VQin\VBup)\VRup\inverse$
            \State $\VB \gets \VB + \VBup\VR$
            \State $\VR \gets \VRup\VR$
            \State $\VBup \gets \VQin\tra\VQ$ \Comment{Update $\VBup$ for next iteration.}
            \State $\VX \gets \VQ\tra\VQ - \VBup\tra\VBup$ \Comment{Update $\VX$ for next iteration.}
        \EndWhile
        \State $\VQout \gets\begin{bmatrix}\VQin & \VQ\end{bmatrix}$
        \State $\VRout \gets \begin{bmatrix} \VRin & \VB\\ 0 & \VR \end{bmatrix}$
    \end{algorithmic}
\end{algorithm}

\subsection{A panel scheme}%
\label{sec:panel_scheme}%
{
    % RENAME MATRICES ONLY LOCALLY
    \RenewDocumentCommand{\VA}{}{\ensuremath{A}}           % regular input A for input in non-update algos
    \RenewDocumentCommand{\VQ}{}{\ensuremath{Q}}           % regular Q for iterations and output
    \RenewDocumentCommand{\VR}{}{\ensuremath{R}}           % regular R (R = Rup*R)
    \RenewDocumentCommand{\VRup}{}{\ensuremath{\tilde{R}}} % R for update (Rup = chol(X))
    \begin{algorithm}[th]
        \caption{(\pnCholQR): Cholesky~QR panel scheme.}\label{alg:panel_cqr}
        \begin{algorithmic}[1]
            \Require $\VA=\begin{bmatrix}\VA_1 & \VA_2 & \ldots & \VA_r \end{bmatrix} \in\R[m][n]$, unit roundoff $\uro>0$
            \Ensure $\VQ\in\R[m][n], \VR\in\R[n][n]$
            \State $\VQ \gets \begin{bmatrix}\phantom{0}\end{bmatrix}, \VR \gets \begin{bmatrix}\phantom{0}\end{bmatrix}$
            \For{$i=1,2,\ldots,r$}
                \State $\VQ, \VR \gets$ {\cholQRUpdate}$(\VQ, \VR, \VA_i, \uro)$
            \EndFor
        \end{algorithmic}
    \end{algorithm}
}

We introduce a straightforward QR panel scheme based on {\cholQRUpdate} from
\Cref{algo:cholqrupdate}. The main goal of this approach are runtime improvements. The idea is to split the input matrix $A$ into $r$ (ideally) equally sized panels $A=\begin{bmatrix}A_1 & A_2 & \ldots & A_r\end{bmatrix}$ and apply {\cholQRUpdate} iteratively onto these panels. This scheme reduces the size of the Gramian per iteration, allowing for a cheaper Cholesky decomposition and shift evaluation, especially if $X$ is small enough to fit into the lower cache levels. In \Cref{cha:rs_pn_flop_comp}, we analyze the operation counts and show a theoretical decrease in flops by a factor of up to two for an increasing amount of panels compared to {\rsCholQR}. This can potentially also lead to a reduction in power consumption. The pseudocode for this procedure can be found in \Cref{alg:panel_cqr}.

However, the reduction of flops does not necessarily result in a decrease in runtime, as will
be illustrated in \Cref{cha:bench_runtimes}. Unlike, for example, the level-3-type {\lapack} implementations of the LU (\getrf) and QR decomposition ({\geqrf} + {\gqr}), which replace several {\blas} level-1-type and level-2-type calls with level-3-type routines on the full matrix, {\pnCholQR} only introduces more level-3-type
calls with smaller Gramian matrices depending on the size of the panels.
However, the aforementioned issues with {\cholQRUpdate} persist as the algorithm cannot act cleverly in the large direction (\(m\)). In other words, we admittedly cannot avoid the large direction, but we can control the size of the Gramians and the data traffic required for forming them and computing with them.
At the same time, lower level caches get increasingly exclusive to single cores
or core groups, limiting the parallelism in the computations. The latter being
especially harmful for systems with many cores, as we will see in \Cref{sec:benchmarks}. Moreover, we observed fewer repetitions are required to evaluate the QR decomposition of the first few panels, but it becomes successively more expensive for the later panels due to the higher probability/fraction of linear dependence.
%
%% Local Variables:
%% jinx-languages: "en_US"
%% End:

\section{Benchmarks}%
\label{sec:benchmarks}
\begin{table*}[tp]
  \rowcolors{2}{white}{gray!25!white}
    \centering
    \begin{tabular}{| l | l | l |}\hline
                                                    & Server                    & Notebook              \\\hline
        Operating system                            & Ubuntu 20.04.6 LTS        & Ubuntu 24.04.2 LTS    \\
        Conda version                               & 25.3.0                    & 24.5.0                \\\hline
        Main memory size                            & $\approx$ 1 TB            & 16 GB                 \\\hline
        CPU                                         & 2x {\PCepyc}              & {\PCryzen}            \\
        CPU architecture                            & {\zen} 3                  & {\zen} 4              \\
        Number physical cores (logical)             & 64 (128)                  & 8 (16)                \\\hline
        L3 cache per 8 cores (CPU total)            & 32 MB (256 MB)            & 16 MB (16 MB)         \\
        L2 cache per core (CPU total)               & 512 KB (32.768 MB)        & 512 KB (4.096 MB)     \\
        L1 instructions cache  per core (CPU total) & 32 KB (2.048 MB)          & 32 KB (0.256 MB)      \\
        L1 data cache per core (CPU total)          & 32 KB (2.048 MB)          & 32 KB (0.256 MB)      \\\hline
    \end{tabular}
    \caption{Overview of the operating systems and hardware for the two systems
      used for benchmarking. Server has two CPU sockets, but only the first CPU
      was used ($\approx$ \(500~\)GB RAM).}%
    \label{tab:bench_systems}
\end{table*}
\begin{table*}[tp]
    \rowcolors{2}{white}{gray!25!white}
    \centering
    \begin{tabular}{| l | l | l |}\hline
                                                    & Main Environment  & \fenics{} Environment \\\hline
        Package                                     & Version           & Version               \\\hline
        \openblas~\cite{openblas}                   & 0.3.21            & 0.3.29                \\
        Python                                      & 3.10.17           & 3.12.10               \\
        \numpy~\cite{harris2020array}               & 1.24.4            & 1.26.4                \\
        \scipy~\cite{2020SciPy-NMeth}               & 1.10.1            & 1.15.2                \\
        \pymor~\cite{pymor24.2}                     & 2024.2.0          & 2024.2.0              \\
        \ngsolve~\cite{ngsolve,schoberl1997}        & 6.2.2501          & n.a.                  \\
        \fenics{}~\cite{AlnaesEtal2015,LoggEtal2012}& n.a.              & 2019.1.0              \\\hline
    \end{tabular}
    \caption{Software packages used are installed via conda. Only the most
      important software package versions are listed in this table. For all
      installed packages and their respective versions, please refer to the provided YAML file.}%
    \label{tab:bench_software}
\end{table*}
\begin{figure*}[tp]
  \begin{subfigure}{\textwidth}
    \centering
    \newcommand{\PATHA}[0]{data/1647541c-pamm_chol_qr_over_cond.csv}
\newcommand{\PATHB}[0]{data/d0a6fb85-pamm_chol_qr_over_cond.csv}

\newcommand{\addPlots}[2]
{
    \addPlotCsv{CholQR-#1}{#2}
    \addPlotCsv{CholQR2-#1}{#2}
    \addPlotCsv{sCholQR-#1}{#2}
    \addPlotCsv{isCholQR-#1}{#2}
    \addPlotCsv{rsCholQR-#1}{#2}
}

\begin{tikzpicture}%
    \pgfplotsset
    {
        width = .49\textwidth,
        height = 5cm,
        yticklabel style={anchor=east, text width=\tickwidth, align=right},
        ylabel style = {rotate=0, anchor=south, align=center, text width=5cm},
        legend cell align = left,
        minor y tick num=9,
        enlarge x limits=0.05,
        enlarge y limits=0.1,
        xmin=1e0, xmax=1e20,
        ymin=0, ymax=1.8e-3,
        xtickten={0,5,...,20},
        ClCholQR,
    }
    \begin{axis}
    [
        legend columns = 5,
        legend style = {at={(1, 1)}, anchor=south, xshift=0.005\textwidth, yshift=4em},
        % ymode=log,
        xmode=log,
        ylabel={Runtime [s]}, % first plot only
        legend entries = {\cholQR,\cholQRtwo,\sCholQR,\isCholQR,\rsCholQR}, % first plot only
        name = P0, xlabel={$\cond{A}$}, % all plots
        % at={(P0.east)}, anchor=west, xshift=0.01\textwidth, yticklabel=\empty,% last plot only
    ]
        \addPlots{runtime}{\PATHA}
    \end{axis}
    \begin{axis}
    [
        legend columns = 4,
        legend style = {at={(0.5, 1)}, anchor=south},
        % ymode=log,
        xmode=log,
        scaled y ticks=false,
        %
        % ylabel={Runtime [s]}, legend entries = {SCIPY QR, MGS2, rsCholQR, pnCholQR$(2)$, pnCholQR$(3)$, pnCholQR$(4)$, pnCholQR$(5)$}, % first plot only
        % legend entries = {CholQR, CholQR2, sCholQR3, iter-sCholQR, rsCholQR}, % first plot only
        name = P1, xlabel={$\cond{A}$}, % all plots
        at={(P0.east)}, anchor=west, xshift=0.01\textwidth, yticklabel=\empty,% last plot only
    ]
        \addPlots{runtime}{\PATHB}
    \end{axis}
    \node[anchor=south, yshift=1em] at (P0.north) {\bfseries Server};
    \node[anchor=south, yshift=1em] at (P1.north) {\bfseries Notebook};
\end{tikzpicture}%
    % removed for final submission
    % \tikzexternalenable
    % \tikzsetnextfilename{#1}
    % \input{graphics/tikz/#1.tex}
    % \tikzexternaldisable

    \caption{Runtime measurements. \rsCholQR{} has a higher overhead due to the {\pymor} abstraction layer. Its runtime increases for higher matrix condition numbers $\cond{A}$ due to more iterations and shifts being required to achieve orthogonality.}
    \label{fig:chol_runtime}
  \end{subfigure}
  \begin{subfigure}{\textwidth}
    \centering
    \newcommand{\PATHA}[0]{data/1647541c-pamm_chol_qr_over_cond.csv}
\newcommand{\PATHB}[0]{data/d0a6fb85-pamm_chol_qr_over_cond.csv}

\newcommand{\addPlots}[2]
{
    \addPlotCsv{CholQR-#1}{#2}
    \addPlotCsv{CholQR2-#1}{#2}
    \addPlotCsv{sCholQR-#1}{#2}
    \addPlotCsv{isCholQR-#1}{#2}
    \addPlotCsv{rsCholQR-#1}{#2}
}

\begin{tikzpicture}%
    \pgfplotsset
    {
        width = .49\textwidth,
        height = 5cm,
        yticklabel style={anchor=east, text width=\tickwidth, align=right},
        ylabel style = {rotate=0, anchor=south, align=center, text width=5cm},
        legend cell align = left,
        minor y tick num=9,
        enlarge x limits=0.05,
        enlarge y limits=0.1,
        xmin=1e0, xmax=1e20,
        ymin=3e-17, ymax=5e-13,
        xtickten={0,5,...,20},
        scaled ticks=false,
        ClCholQR,
    }
    \begin{axis}
    [
        legend columns = 5,
        legend style = {at={(1, 1)}, anchor=south, xshift=0.005\textwidth, yshift=4em},
        ymode=log,
        xmode=log,
        ylabel={\err{LOO}}, % first plot only
        % legend entries = {CholQR, CholQR2, sCholQR3, iter-sCholQR, rsCholQR}, % first plot only
        name = P0, xlabel={$\cond{A}$}, % all plots
        % at={(P0.east)}, anchor=west, xshift=0.01\textwidth, yticklabel=\empty,% last plot only
    ]
        \addPlots{orth-loss}{\PATHA}
        \addplot [red,dashed,domain=10^(-1):10^21,samples=2]{3.16*10^(-13)};
        % \addplot [blue,dotted,thick] coordinates {(10^10.479558148124598,10^-18) (10^10.479558148124598,10^-11)};
    \end{axis}
    \begin{axis}
    [
        legend columns = 4,
        legend style = {at={(0.5, 1)}, anchor=south},
        ymode=log,
        xmode=log,
        scaled y ticks=false,
        %
        % ylabel={Runtime [s]}, % first plot only
        % legend entries = {CholQR, CholQR2, sCholQR3, iter-sCholQR, rsCholQR}, % first plot only
        name = P1, xlabel={$\cond{A}$}, % all plots
        at={(P0.east)}, anchor=west, xshift=0.01\textwidth, yticklabel=\empty,% last plot only
    ]
        \addPlots{orth-loss}{\PATHB}
        \addplot [red,dashed,domain=10^(-1):10^21,samples=2]{3.16*10^-13};
        % \addplot [blue,dotted,thick] coordinates {(10^10.479558148124598,10^-18) (10^10.479558148124598,10^-11)};
    \end{axis}
\end{tikzpicture}%
    % removed for final submission
    % \tikzexternalenable
    % \tikzsetnextfilename{#1}
    % \input{graphics/tikz/#1.tex}
    % \tikzexternaldisable

    \caption{Loss of orthogonality $\err{LOO}(Q)=\norm{I-Q\tra Q}$.}
    \label{fig:chol_loo}
  \end{subfigure}
  \caption{Comparison of QR decomposition methods for $A\in\R[300][10]$ and varying condition numbers $\kappa_2(A)=10^x$, $x\in[0,1,\ldots,20]$, \(50\) repetitions. \cholQR{} and \cholQRtwo{} fail to compute the Cholesky decomposition for $\cond{A}>\e{8}$. The same occurs for \sCholQR{} and \isCholQR{} for $\cond{A}\gtrapprox\e{16}$. \rsCholQR{} is numerical robust for the given benchmark.}
  \label{fig:chol_figure}
\end{figure*}
\begin{figure*}[tp]
    \begin{subfigure}{\textwidth}
        \centering
    \newcommand{\PATHA}[0]{data/1647541c-pamm_main_over_cond.csv}
\newcommand{\PATHB}[0]{data/d0a6fb85-pamm_main_over_cond.csv}

\newcommand{\addPlots}[2]
{
    \addPlotCsv{ScipyQR-#1}{#2}
    \addPlotCsv{MGS2-#1}{#2}
    \addPlotCsv{rsCholQR-#1}{#2}
    \addPlotCsv{pnCholQR(2)-#1}{#2}
    \addPlotCsv{pnCholQR(3)-#1}{#2}
    \addPlotCsv{pnCholQR(4)-#1}{#2}
    \addPlotCsv{pnCholQR(5)-#1}{#2}
}

\begin{tikzpicture}%
    \pgfplotsset
    {
        width = .49\textwidth,
        height = 5cm,
        ylabel style = {rotate=0, anchor=south, align=center, text width=5cm},
        legend cell align = left,
        minor y tick num=9,
        enlarge x limits=0.05,
        enlarge y limits=0.1,
        xmin=1e0, xmax=1e20,
        ymin=1e-16, ymax=1e-13,
        xtickten={0,5,...,20},
        ClLinePlot,
    }
    \begin{axis}
    [
        legend columns = 4,
        legend style = {at={(1, 1)}, anchor=south, xshift=0.005\textwidth, yshift=4em},
        ymode=log,
        xmode=log,
        ylabel={\err{LOO}},% first plot only
        legend entries = {\scipyQR,\mgs,\rsCholQR,\pnCholQR[2],\pnCholQR[3],\pnCholQR[4],\pnCholQR[5]}, % first plot only
        name = P0, xlabel={$\cond{A}$}, % all plots
        % at={(P0.east)}, anchor=west, xshift=0.01\textwidth, yticklabel=\empty,% last plot only
    ]
        \addPlots{orth-loss}{\PATHA}
    \end{axis}
    \begin{axis}
    [
        legend columns = 4,
        legend style = {at={(0.5, 1)}, anchor=south},
        ymode=log,
        xmode=log,
        scaled y ticks=false,
        %
        % ylabel={Runtime [s]}, legend entries = {SCIPY QR, MGS2, rsCholQR, pnCholQR$(2)$, pnCholQR$(3)$, pnCholQR$(4)$, pnCholQR$(5)$}, % first plot only
        name = P1, xlabel={$\cond{A}$}, % all plots
        at={(P0.east)}, anchor=west, xshift=0.01\textwidth, yticklabel=\empty,% last plot only
    ]
        \addPlots{orth-loss}{\PATHB}
    \end{axis}
    \node[anchor=south, yshift=1em] at (P0.north) {\bfseries Server};
    \node[anchor=south, yshift=1em] at (P1.north) {\bfseries Notebook};
\end{tikzpicture}%
    % removed for final submission
    % \tikzexternalenable
    % \tikzsetnextfilename{#1}
    % \input{graphics/tikz/#1.tex}
    % \tikzexternaldisable

        \caption{Loss of orthogonality $\err{LOO}(Q)=\norm{I-Q\tra Q}$.}
        \label{fig:loo_cond}
    \end{subfigure}
    \begin{subfigure}{\textwidth}
        \centering
    \newcommand{\PATHA}[0]{data/1647541c-pamm_main_over_cond.csv}
\newcommand{\PATHB}[0]{data/d0a6fb85-pamm_main_over_cond.csv}

\newcommand{\addPlots}[2]
{
    \addPlotCsv{ScipyQR-#1}{#2}
    \addPlotCsv{MGS2-#1}{#2}
    \addPlotCsv{rsCholQR-#1}{#2}
    \addPlotCsv{pnCholQR(2)-#1}{#2}
    \addPlotCsv{pnCholQR(3)-#1}{#2}
    \addPlotCsv{pnCholQR(4)-#1}{#2}
    \addPlotCsv{pnCholQR(5)-#1}{#2}
}

\begin{tikzpicture}%
    \pgfplotsset
    {
        width = .49\textwidth,
        height = 5cm,
        ylabel style = {rotate=0, anchor=south, align=center, text width=5cm},
        legend cell align = left,
        minor y tick num=9,
        enlarge x limits=0.05,
        enlarge y limits=0.1,
        xmin=1e0, xmax=1e20,
        ymin=5e-17, ymax=2.5e-14,
        xtickten={0,5,...,20},
        ClLinePlot,
    }
    \begin{axis}
    [
        legend columns = 4,
        legend style = {at={(1, 1)}, anchor=south, xshift=0.005\textwidth, yshift=4em},
        ymode=log,
        xmode=log,
        %
        % legend entries = {SCIPY QR,MGS2,\rsCholQR,\pnCholQR[2],\pnCholQR[3],\pnCholQR[4],\pnCholQR[5]}, % first plot only
        ylabel={\err{RRR}},
        % legend entries = {SCIPY QR, MGS2, rsCholQR, pnCholQR$(2)$, pnCholQR$(3)$, pnCholQR$(4)$, pnCholQR$(5)$}, % first plot only
        name = P0, xlabel={$\cond{A}$}, % all plots
        % at={(P0.east)}, anchor=west, xshift=0.01\textwidth, yticklabel=\empty,% last plot only
    ]
        \addPlots{rel-rec-res}{\PATHA}
    \end{axis}
    \begin{axis}
    [
        legend columns = 4,
        legend style = {at={(0.5, 1)}, anchor=south},
        ymode=log,
        xmode=log,
        scaled y ticks=false,
        %
        % ylabel={Runtime [s]}, legend entries = {SCIPY QR, MGS2, rsCholQR, pnCholQR$(2)$, pnCholQR$(3)$, pnCholQR$(4)$, pnCholQR$(5)$}, % first plot only
        name = P1, xlabel={$\cond{A}$}, % all plots
        at={(P0.east)}, anchor=west, xshift=0.01\textwidth, yticklabel=\empty,% last plot only
    ]
        \addPlots{rel-rec-res}{\PATHB}
    \end{axis}
\end{tikzpicture}%
    % removed for final submission
    % \tikzexternalenable
    % \tikzsetnextfilename{#1}
    % \input{graphics/tikz/#1.tex}
    % \tikzexternaldisable

        \caption{Relative reconstruction residual $\err{RRR}(A,Q,R)=\frac{\norm{A-QR}}{\norm{A}}$.}
        \label{fig:rrr_cond}
    \end{subfigure}
    \begin{subfigure}{\textwidth}
        \centering
    \newcommand{\PATHA}[0]{data/1647541c-pamm_main_over_cond.csv}
\newcommand{\PATHB}[0]{data/d0a6fb85-pamm_main_over_cond.csv}

\newcommand{\addPlots}[2]
{
    \addPlotCsv{ScipyQR-#1}{#2}
    \addPlotCsv{MGS2-#1}{#2}
    \addPlotCsv{rsCholQR-#1}{#2}
    \addPlotCsv{pnCholQR(2)-#1}{#2}
    \addPlotCsv{pnCholQR(3)-#1}{#2}
    \addPlotCsv{pnCholQR(4)-#1}{#2}
    \addPlotCsv{pnCholQR(5)-#1}{#2}
}

\begin{tikzpicture}%
    \pgfplotsset
    {
        width = .49\textwidth,
        height = 5cm,
        ylabel style = {rotate=0, anchor=south, align=center, text width=5cm},
        legend cell align = left,
        minor y tick num=9,
        enlarge x limits=0.05,
        enlarge y limits=0.1,
        xmin=1e0, xmax=1e20,
        ymin=5e-17, ymax=1e-14,
        xtickten={0,5,...,20},
        ClLinePlot,
    }
    \begin{axis}
    [
        legend columns = 4,
        legend style = {at={(1, 1)}, anchor=south, xshift=0.005\textwidth, yshift=4em},
        ymode=log,
        xmode=log,
        scaled y ticks=false,
        %
        % legend entries = {SCIPY QR,MGS2,\rsCholQR,\pnCholQR[2],\pnCholQR[3],\pnCholQR[4],\pnCholQR[5]}, % first plot only
        ylabel={\err{RCR}},
        % legend entries = {SCIPY QR, MGS2, rsCholQR, pnCholQR$(2)$, pnCholQR$(3)$, pnCholQR$(4)$, pnCholQR$(5)$}, % first plot only
        name = P0, xlabel={$\cond{A}$}, % all plots
        % at={(P0.east)}, anchor=west, xshift=0.01\textwidth, yticklabel=\empty,% last plot only
    ]
        \addPlots{rel-chol-res}{\PATHA}
    \end{axis}
    \begin{axis}
    [
        legend columns = 4,
        legend style = {at={(0.5, 1)}, anchor=south},
        ymode=log,
        xmode=log,
        %
        % ylabel={Runtime [s]}, legend entries = {SCIPY QR, MGS2, rsCholQR, pnCholQR$(2)$, pnCholQR$(3)$, pnCholQR$(4)$, pnCholQR$(5)$}, % first plot only
        name = P1, xlabel={$\cond{A}$}, % all plots
        at={(P0.east)}, anchor=west, xshift=0.01\textwidth, yticklabel=\empty,% last plot only
    ]
        \addPlots{rel-chol-res}{\PATHB}
    \end{axis}
\end{tikzpicture}%
    % removed for final submission
    % \tikzexternalenable
    % \tikzsetnextfilename{#1}
    % \input{graphics/tikz/#1.tex}
    % \tikzexternaldisable

        \caption{Relative Cholesky residual $\err{RCR}(A,R)=\frac{\norm{A\tra A-R\tra R}}{\norm{A}^2}$.}
        \label{fig:rcr_cond}
    \end{subfigure}
    \label{fig:errors_cond_figure}
    \caption{QR quality measures over varying matrix condition number of a matrix $A\in\R[\e{6}][100]$ and condition numbers $\kappa_2(A)=10^x$, $x\in[0,1,\ldots,20]$, \(10\) repetitions. Due to the rank-revealing features of {\mgs}, its relative reconstruction residual $\err{RRR}$ increases for $\cond{A}\geq\e{15}$. All algorithms provide good solutions and do not fail.}
\end{figure*}
\begin{figure*}[tp]
    \begin{subfigure}{\textwidth}
        \centering
    \newcommand{\PATHA}[0]{data/1647541c-pamm_main_over_cond.csv}
\newcommand{\PATHB}[0]{data/d0a6fb85-pamm_main_over_cond.csv}

\newcommand{\addPlots}[2]
{
    \addPlotCsv{ScipyQR-#1}{#2}
    \addPlotCsv{MGS2-#1}{#2}
    \addPlotCsv{rsCholQR-#1}{#2}
    \addPlotCsv{pnCholQR(2)-#1}{#2}
    \addPlotCsv{pnCholQR(3)-#1}{#2}
    \addPlotCsv{pnCholQR(4)-#1}{#2}
    \addPlotCsv{pnCholQR(5)-#1}{#2}
}

\begin{tikzpicture}%
    \pgfplotsset
    {
        width = .49\textwidth,
        height = 5cm,
        ylabel style = {rotate=0, anchor=south, align=center, text width=5cm},
        legend cell align = left,
        minor y tick num=9,
        enlarge x limits=0.05,
        enlarge y limits=0.1,
        xmin=1e0, xmax=1e20,
        ymin=0, ymax=20,
        xtickten={0,5,...,20},
        ClLinePlot,
    }
    \begin{axis}
    [
        legend columns = 4,
        legend style = {at={(1, 1)}, anchor=south, xshift=0.005\textwidth, yshift=4em},
        % ymode=log,
        xmode=log,
        ylabel={Runtime [s]},
        legend entries = {\scipyQR,\mgs,\rsCholQR,\pnCholQR[2],\pnCholQR[3],\pnCholQR[4],\pnCholQR[5]}, % first plot only
        name = P0, xlabel={$\cond{A}$}, % all plots
        % at={(P0.east)}, anchor=west, xshift=0.01\textwidth, yticklabel=\empty,% last plot only
    ]
        \addPlots{runtime}{\PATHA}
    \end{axis}
    \begin{axis}
    [
        legend columns = 4,
        legend style = {at={(0.5, 1)}, anchor=south},
        % ymode=log,
        xmode=log,
        scaled y ticks=false,
        %
        % ylabel={Runtime [s]}, legend entries = {SCIPY QR, MGS2, rsCholQR, pnCholQR$(2)$, pnCholQR$(3)$, pnCholQR$(4)$, pnCholQR$(5)$}, % first plot only
        name = P1, xlabel={$\cond{A}$}, % all plots
        at={(P0.east)}, anchor=west, xshift=0.01\textwidth, yticklabel=\empty,% last plot only
    ]
        \addPlots{runtime}{\PATHB}
    \end{axis}
    \node[anchor=south, yshift=1em] at (P0.north) {\bfseries Server};
    \node[anchor=south, yshift=1em] at (P1.north) {\bfseries Notebook};
\end{tikzpicture}%
    % removed for final submission
    % \tikzexternalenable
    % \tikzsetnextfilename{#1}
    % \input{graphics/tikz/#1.tex}
    % \tikzexternaldisable

        \caption{Runtime over varying matrix condition number \cond{A}, $A\in\R[\e{6}][100]$, $\kappa_2(A)=10^x$, $x\in[0,1,\ldots,20]$.}
        \label{fig:runtime_cond}
    \end{subfigure}
    \begin{subfigure}{\textwidth}
        \centering
    \newcommand{\PATHA}[0]{data/1647541c-pamm_main_over_m.csv}
\newcommand{\PATHB}[0]{data/d0a6fb85-pamm_main_over_m.csv}

\renewcommand{\addPlotCsv}[2]{
    % #2 key - e.g. 0-time-MED
    % #2 csv path
    \addplot table [y=#1, col sep=comma, empty line=jump, unbounded coords=jump, y expr=\thisrow{#1}/\thisrow{ID}] {#2};
}

\newcommand{\addPlots}[2]
{
    \addPlotCsv{ScipyQR-#1}{#2}
    \addPlotCsv{MGS2-#1}{#2}
    \addPlotCsv{rsCholQR-#1}{#2}
    \addPlotCsv{pnCholQR(2)-#1}{#2}
    \addPlotCsv{pnCholQR(3)-#1}{#2}
    \addPlotCsv{pnCholQR(4)-#1}{#2}
    \addPlotCsv{pnCholQR(5)-#1}{#2}
}

\begin{tikzpicture}%
    \pgfplotsset
    {
        width = .49\textwidth,
        height = 5cm,
        ylabel style = {rotate=0, anchor=south, align=center, text width=5cm},
        legend cell align = left,
        minor y tick num=9,
        enlarge x limits=0.05,
        enlarge y limits=0.1,
        xmin=1e4, xmax=1e7,
        ymin=0, ymax=2.2e-5,
        xtickten={4,4.5,...,7},
        ClLinePlot,
    }
    \begin{axis}
    [
        legend columns = 4,
        legend style = {at={(1, 1)}, anchor=south, xshift=0.005\textwidth, yshift=4em},
        % ymode=log,
        xmode=log,
        ylabel={Runtime [s] / $m$},
        % legend entries = {\textsc{SCIPY QR},\textsc{MGS2},\rsCholQR,\pnCholQR[2],\pnCholQR[3],\pnCholQR[4],\pnCholQR[5]}, % first plot only
        name = P0, xlabel={$m$}, % all plots
        % at={(P0.east)}, anchor=west, xshift=0.01\textwidth, yticklabel=\empty,% last plot only
    ]
        \addPlots{runtime}{\PATHA}
    \end{axis}
    \begin{axis}
    [
        legend columns = 4,
        legend style = {at={(0.5, 1)}, anchor=south},
        % ymode=log,
        xmode=log,
        scaled y ticks=false,
        %
        % ylabel={Runtime [s]}, legend entries = {SCIPY QR, MGS2, rsCholQR, pnCholQR$(2)$, pnCholQR$(3)$, pnCholQR$(4)$, pnCholQR$(5)$}, % first plot only
        name = P1, xlabel={$m$}, % all plots
        at={(P0.east)}, anchor=west, xshift=0.01\textwidth, yticklabel=\empty,% last plot only
    ]
        \addPlots{runtime}{\PATHB}
    \end{axis}
    % \node[anchor=south, yshift=1em] at (P0.north) {\bfseries Server};
    % \node[anchor=south, yshift=1em] at (P1.north) {\bfseries Notebook};
\end{tikzpicture}%
    % removed for final submission
    % \tikzexternalenable
    % \tikzsetnextfilename{#1}
    % \input{graphics/tikz/#1.tex}
    % \tikzexternaldisable

        \caption{Relative runtime $\frac{\mathrm{runtime}}{m}$ over varying vector length $m$, $A\in\R[m][100]$, $m\in[\e{4},\ldots,\e{7}]$, $\kappa_2(A)=10^{20}$. The notebook is unable to compute the QR decompositions for $m\geq \e{6.48}$ due to memory limitations.}
        \label{fig:runtime_m}
    \end{subfigure}
    \begin{subfigure}{\textwidth}
        \centering
    \newcommand{\PATHA}[0]{data/1647541c-pamm_main_over_n.csv}
\newcommand{\PATHB}[0]{data/d0a6fb85-pamm_main_over_n.csv}

\newcommand{\addPlots}[2]
{
    \addPlotCsv{ScipyQR-#1}{#2}
    \addPlotCsv{MGS2-#1}{#2}
    \addPlotCsv{rsCholQR-#1}{#2}
    \addPlotCsv{pnCholQR(2)-#1}{#2}
    \addPlotCsv{pnCholQR(3)-#1}{#2}
    \addPlotCsv{pnCholQR(4)-#1}{#2}
    \addPlotCsv{pnCholQR(5)-#1}{#2}
}

\begin{tikzpicture}%
    \pgfplotsset
    {
        width = .49\textwidth,
        height = 5cm,
        ylabel style = {rotate=0, anchor=south, align=center, text width=5cm},
        legend cell align = left,
        minor y tick num=9,
        enlarge x limits=0.05,
        enlarge y limits=0.1,
        xmin=50, xmax=500,
        ymin=0, ymax=30,
        ClLinePlot,
    }
    \begin{axis}
    [
        legend columns = 4,
        legend style = {at={(1, 1)}, anchor=south, xshift=0.005\textwidth, yshift=1em},
        % ymode=log,
        % xmode=log,
        %
        % legend entries = {SCIPY QR,MGS2,\rsCholQR,\pnCholQR[2],\pnCholQR[3],\pnCholQR[4],\pnCholQR[5]}, % first plot only
        % legend entries = {SCIPY QR, MGS2, rsCholQR, pnCholQR$(2)$, pnCholQR$(3)$, pnCholQR$(4)$, pnCholQR$(5)$}, 
        ylabel={Runtime [s]}, % first plot only
        name = P0, xlabel={$n$}, % all plots
        % at={(P0.east)}, anchor=west, xshift=0.01\textwidth, yticklabel=\empty,% last plot only
    ]
        \addPlots{runtime}{\PATHA}
    \end{axis}
    \begin{axis}
    [
        legend columns = 4,
        legend style = {at={(0.5, 1)}, anchor=south},
        % ymode=log,
        % xmode=log,
        scaled y ticks=false,
        %
        % ylabel={Runtime [s]}, legend entries = {SCIPY QR, MGS2, rsCholQR, pnCholQR$(2)$, pnCholQR$(3)$, pnCholQR$(4)$, pnCholQR$(5)$}, % first plot only
        name = P1, xlabel={$n$}, % all plots
        at={(P0.east)}, anchor=west, xshift=0.01\textwidth, yticklabel=\empty,% last plot only
    ]
        \addPlots{runtime}{\PATHB}
    \end{axis}
\end{tikzpicture}%
    % removed for final submission
    % \tikzexternalenable
    % \tikzsetnextfilename{#1}
    % \input{graphics/tikz/#1.tex}
    % \tikzexternaldisable

        \caption{Runtime over varying number of columns $n$, $A\in\R[\e{6}][n]$, $n\in[50,100,\ldots,500]$, $\kappa_2(A)=10^{20}$. The notebook is unable to compute the QR decompositions for $n\geq 250$ due to memory limitations.}
        \label{fig:runtime_n}
    \end{subfigure}
    \label{fig:runtime_figure}
    \caption{Runtime comparison by variation of different aspects of the original matrix \(A\) with \(10\) repetitions.}
\end{figure*}
\begin{figure*}[tp]
  \centering
    \newcommand{\PATHA}[0]{data/1647541c-pamm_main_over_many_n.csv}

\newcommand{\addPlots}[2]
{
    \addPlotCsv{ScipyQR-#1}{#2}
    \pgfplotsset{cycle list shift=1}
    \addPlotCsv{rsCholQR-#1}{#2}
    \addPlotCsv{pnCholQR(2)-#1}{#2}
    \addPlotCsv{pnCholQR(3)-#1}{#2}
    \addPlotCsv{pnCholQR(4)-#1}{#2}
    \addPlotCsv{pnCholQR(5)-#1}{#2}
}
\pgfkeys{/pgf/number format/1000 sep={\,}}
\begin{tikzpicture}%
    \pgfplotsset
    {
        width = .95\textwidth,
        height = 6cm,
        yticklabel style={anchor=east, text width=\tickwidth, align=right},
        ylabel style = {rotate=0, anchor=south, align=center, text width=5cm},
        legend cell align = left,
        minor y tick num=9,
        enlarge x limits=0.05,
        enlarge y limits=0.1,
        xmin=500, xmax=10000,
        xtick = {1000,2000,...,10000},
        scaled ticks=false,
        ClLinePlot,
    }
    \begin{axis}
    [
        legend columns = 4,
        legend style = {at={(0.5, 1)}, anchor=south, xshift=0.005\textwidth, yshift=1em},
        % ymode=log,
        % xmode=log,
        %
        legend entries = {\scipyQR,\rsCholQR,\pnCholQR[2],\pnCholQR[3],\pnCholQR[4],\pnCholQR[5]}, % first plot only
        ylabel={Runtime [s]}, % first plot only
        name = P0, % all plots
        % at={(P0.east)}, anchor=west, xshift=0.01\textwidth, yticklabel=\empty,% last plot only
    ]
        \addPlots{runtime}{\PATHA}
    \end{axis}
\end{tikzpicture}%
    % removed for final submission
    % \tikzexternalenable
    % \tikzsetnextfilename{#1}
    % \input{graphics/tikz/#1.tex}
    % \tikzexternaldisable

  \caption{Runtime over varying number of columns $n$ with 3 repetitions, $A\in\R[\e{6}][n]$, $n\in[500,1\,000,\ldots,10\,000]$, $\kappa_2(A)=10^{5}$.}
  \label{fig:runtime_many_n}
\end{figure*}
\begin{figure*}[tp]
  \begin{subfigure}{\textwidth}
    \centering
    \newcommand{\PATHA}[0]{data/16-47-pamm_main_over_va_lcond.csv}
\newcommand{\PATHB}[0]{data/d0-e8-pamm_main_over_va_lcond.csv}

\begin{tikzpicture}%
    \pgfplotsset{
        width = .95\textwidth,
        height = 5cm,
        major x tick style = transparent,
        ybar,
        ybar=2*\pgflinewidth,
        ymajorgrids = true,
        symbolic x coords={numpy, numpy-list, ngsolve-list, fenics-list},
        xtick={numpy, numpy-list, ngsolve-list, fenics-list},
        xticklabels={\texttt{Numpy-},\texttt{Numpy-},\texttt{NGSolve-}, \texttt{FEniCS-}},
        ytick={0,25,50,75,100,125}, ymin=0, ymax=105,
        ylabel style = {rotate=0, anchor=south, align=center, text width=2cm},
        scaled y ticks = false,
        enlarge x limits=0.2,
        legend columns = 4,
        legend style = {at={(0.5, 1)}, anchor=south, yshift=1em},
        legend cell align = left,
        ClBarPlot,
        every node near coord/.append style={rotate=45, anchor=south west, xshift=-4pt, execute at begin node={\scriptsize$\times$}, /pgf/number format/.cd, fixed, precision=2},
        point meta=explicit, nodes near coords, nodes near coords align={vertical}, every node near coord/.append style={ /pgf/number format/.cd, fixed, precision=2 },
    }
    \begin{axis}
        [
            name=P0,
            bar width=1em,
            ylabel = {Server Runtime [s]},
            xticklabel=\empty,
            legend entries = {\scipyQR,\mgs,\rsCholQR,\pnCholQR[2],\pnCholQR[3],\pnCholQR[4],\pnCholQR[5]}, % first plot only
        ]
        \foreach \func in {ScipyQR,MGS2,rsCholQR,pnCholQR(2),pnCholQR(3),pnCholQR(4),pnCholQR(5)}
        {
            \addplot table [y=\func-runtime, meta=\func-rel-runtime, col sep=comma, empty line=jump] {\PATHA};
        }
    \end{axis}
    \begin{axis}
        [
            name=P1,
            at={(P0.south)}, yshift=-0.01\textwidth, anchor=north,
            bar width=1em,
            ylabel = {Notebook Runtime [s]},
            xticklabel style={name=xtick\ticknum},
        ]
        \foreach \func in {ScipyQR,MGS2,rsCholQR,pnCholQR(2),pnCholQR(3),pnCholQR(4),pnCholQR(5)}
        {
            \addplot table [y=\func-runtime, meta=\func-rel-runtime, col sep=comma, empty line=jump] {\PATHB};
        }
    \end{axis}
    \path (xtick0.north) -- (xtick1.north) coordinate[midway] (V);
    \node[anchor=north, shift={(0,-.25em)}] at (xtick0.south) {\texttt{VectorArray}};
    \node[anchor=north, shift={(0,-.25em)}] (LVA) at (xtick2.south) {\texttt{ListVectorArray}};
    \draw  (V |- xtick0.north) -- (V |- LVA.south);
    % \draw [red,opacity=1, line width=1em](xtick0.base) -- (xtick2.base);
\end{tikzpicture}%
    % removed for final submission
    % \tikzexternalenable
    % \tikzsetnextfilename{#1}
    % \input{graphics/tikz/#1.tex}
    % \tikzexternaldisable

    \caption{$\kappa_2(A)=10^{5}$.}
    \label{fig:runtime_va_lcond}
  \end{subfigure}
  \begin{subfigure}{\textwidth}
    \centering
    \newcommand{\PATHA}[0]{data/16-47-pamm_main_over_va_hcond.csv}
\newcommand{\PATHB}[0]{data/d0-e8-pamm_main_over_va_hcond.csv}

\begin{tikzpicture}%
    \pgfplotsset{
        width = .95\textwidth,
        height = 5cm,
        major x tick style = transparent,
        ybar,
        ybar=2*\pgflinewidth,
        ymajorgrids = true,
        symbolic x coords={numpy, numpy-list, ngsolve-list, fenics-list},
        xtick={numpy, numpy-list, ngsolve-list, fenics-list},
        xticklabels={\texttt{Numpy-},\texttt{Numpy-},\texttt{NGSolve-}, \texttt{FEniCS-}},
        ytick={0,25,50,75,100,125}, ymin=0, ymax=132,
        ylabel style = {rotate=0, anchor=south, align=center, text width=2cm},
        scaled y ticks = false,
        enlarge x limits=0.2,
        legend columns = 4,
        legend style = {at={(0.5, 1)}, anchor=south, yshift=1em},
        legend cell align = left,
        ClBarPlot,
        every node near coord/.append style={rotate=45, anchor=south west, xshift=-4pt, execute at begin node={\scriptsize$\times$}, /pgf/number format/.cd, fixed, precision=2},
        point meta=explicit, nodes near coords, nodes near coords align={vertical}, every node near coord/.append style={ /pgf/number format/.cd, fixed, precision=2 },
    }
    \begin{axis}
        [
            name=P0,
            bar width=1em,
            ylabel = {Server Runtime [s]},
            xticklabel=\empty,
            % legend entries = {\scipyQR,\mgs,\rsCholQR,\pnCholQR[2],\pnCholQR[3],\pnCholQR[4],\pnCholQR[5]}, % first plot only
        ]
        \foreach \func in {ScipyQR,MGS2,rsCholQR,pnCholQR(2),pnCholQR(3),pnCholQR(4),pnCholQR(5)}
        {
            \addplot table [y=\func-runtime, meta=\func-rel-runtime, col sep=comma, empty line=jump] {\PATHA};
        }
    \end{axis}
    \begin{axis}
        [
            name=P1,
            at={(P0.south)}, yshift=-0.01\textwidth, anchor=north,
            bar width=1em,
            ylabel = {Notebook Runtime [s]},
            xticklabel style={name=xtick\ticknum},
        ]
        \foreach \func in {ScipyQR,MGS2,rsCholQR,pnCholQR(2),pnCholQR(3),pnCholQR(4),pnCholQR(5)}
        {
            \addplot table [y=\func-runtime, meta=\func-rel-runtime, col sep=comma, empty line=jump] {\PATHB};
        }
    \end{axis}
    \path (xtick0.north) -- (xtick1.north) coordinate[midway] (V);
    \node[anchor=north, shift={(0,-.25em)}] at (xtick0.south) {\texttt{VectorArray}};
    \node[anchor=north, shift={(0,-.25em)}] (LVA) at (xtick2.south) {\texttt{ListVectorArray}};
    \draw  (V |- xtick0.north) -- (V |- LVA.south);
    % \draw [red,opacity=1, line width=1em](xtick0.base) -- (xtick2.base);
\end{tikzpicture}%
    % removed for final submission
    % \tikzexternalenable
    % \tikzsetnextfilename{#1}
    % \input{graphics/tikz/#1.tex}
    % \tikzexternaldisable

    \caption{$\kappa_2(A)=10^{20}$.}
    \label{fig:runtime_va_hcond}
  \end{subfigure}
  \label{fig:runtime_va}
  \caption{Runtime over varying \texttt{VectorArray} backends with 10 repetitions, $A\in\R[\e{6}][100]$. {\scipyQR} is shown in the category \texttt{NumpyVectorArray}, but works on a {\ndarray}.}
\end{figure*}
In the following we introduce our test framework and evaluate our numerical experiments. First, we specify the used computer systems, our reproducible software environments, the tested algorithms, our test matrix construction and the metrics we evaluated. Afterwards, we compare the newly introduced {\rsCholQR} to already existing Cholesky~QR variants, before illustrating the numerical robustness of {\rsCholQR} and {\pnCholQR}.
Lastly, we show our runtime measurements for a variety of aspects of the input matrix $A$ and the used \texttt{VectorArray} backend.

\subsection{Implementation and test environment}
The benchmark runs are executed on two systems. The system details for both are listed in \Cref{tab:bench_systems}. The Server system has two CPU sockets, i.e., two NUMA\footnote{\url{https://en.wikipedia.org/wiki/Non-uniform_memory_access}} nodes. For our experiments, only the first CPU was used -- via node pinning using \texttt{sched\_setaffinity} from the Linux kernel's scheduler instructions -- to ensure uniform memory access. Therefore, only up to \(64\) physical cores and \(500\)~GB RAM can be used. On both systems logical (hyperthreading) cores are allowed. We use both a server and notebook to demonstrate how the performance of the algorithms is affected by hardware capabilities.

We faced difficulties creating one single reproducible Python environment that is
able to run the algorithms for all chosen \texttt{VectorArray} backends without
performance degradation of the algorithms. Therefore, we decided to use two separate
conda~\cite{conda} environments. All shown measurements were created using the
environment Main except for the measurements with the {\fenics}
backend, i.e., the \texttt{FEniCSListVectorArray} which were taken with the \fenics{}
environment. An overview of the most important software package versions can be
found in \Cref{tab:bench_software}. The full conda environment specifications
can be found as YAML files in our {\bfseries code \& data repository on Zenodo~\cite{bindhak2025}} and can easily be installed using Miniforge~\cite{miniforge} and the \texttt{conda-forge} channel.

We are comparing the new algorithms with a number of similar algorithms from the
literature. The algorithms are all implemented in Python using the {\numpy} and
{\scipy} packages as well as {\pymor}~\cite{pymor,milk2016}. The following algorithms are used in our benchmarks:
\begin{enumerate}
\item {\cholQR} and {\cholQRtwo}~\cite{fukaya2014} represent the simple and repeated
  Cholesky~QR algorithms, respectively.\label{algo_list_first}
\item Moreover, we used \emph{`shiftedCholeskyQR3 for X = QR'}~\cite[Algorithm 4.2]{fukaya2020}\footnote{Note again, in~\cite{fukaya2020} the naming scheme for $X$ and $A$ is exactly opposite to the scheme used in this paper.} ({\sCholQR}) and {\isCholQR}, which was previously introduced in \Cref{sec:recomp}. {\sCholQR} applies a shift followed by three Cholesky~QR iterations. {\isCholQR} on the other hand applies a shift, which is evaluated just once, whenever a Cholesky decomposition fails and performs as many iterations as required to achieve orthogonality. Here, we have limited the number of iterations to \(10\).
\item  Furthermore, we used the \texttt{qr} routine from {\scipy}, more precisely
  \texttt{scipy.linalg} (\scipyQR), which is a wrapper for the {\lapack} routines {\geqrf} and {\gqr}, for comparison as an efficient and readily available function for general Python users. Note that one has to explicitly request {\scipyQR} to compute an economy size QR decomposition.
  \label{algo_list_third}
\item The algorithms \texttt{gram\_schmidt} (\mgs) and\newline
  \texttt{shifted\_chol\_qr} (\rsCholQR) are part of the {\pymor} package. Here, {\mgs} is a repeated, modified Gram-Schmidt algorithm with additional rank-revealing features. The default settings of the algorithm include a drop-tolerance of \e{-13} for linearly dependent vectors.

The \texttt{shifted\_chol\_qr} routine is an implementation of \Cref{algo:cholqr,algo:cholqrupdate}, whose interface allows switching between both algorithms.\label{algo_list_fourth}
\item Lastly, we used an implementation of \Cref{alg:panel_cqr} (\pnCholQR), which is currently not part of the official {\pymor} package.\label{algo_list_last}
\end{enumerate}
While the algorithms in items~\ref{algo_list_first}\,--\,\ref{algo_list_third} work on matrices of the datatype {\ndarray}, the latter three algorithms in items~\ref{algo_list_fourth}\,\&\,\ref{algo_list_last} are all implemented based on {\pymor}'s abstract \texttt{VectorArray} interface. In principle, a \texttt{VectorArray} is a wrapper for an array of vectors of a given solver backend. Implementations for different backends exist. In this paper, we only focus on the \texttt{NumpyVectorArray} and different types of \texttt{ListVectorArray}s. A \texttt{NumpyVectorArray} is a wrapper for a {\ndarray} and maps the \texttt{VectorArray} functions onto {\ndarray} operations. As a result, not all {\ndarray} features can be leveraged, but more block operations than for a \texttt{ListVectorArray} are available (see below). A \texttt{ListVectorArray} is, as the name suggests, simply an abstract list of vectors from a solver backend. We choose to consider vectors from {\numpy}~\cite{harris2020array}, {\fenics}~\cite{AlnaesEtal2015,LoggEtal2012}, and {\ngsolve}~\cite{ngsolve,schoberl1997} in our measurements. For clarification, a \texttt{NumpyVectorArray} allows us to use many {\blas} level-3-type routines while a \texttt{NumpyListVectorArray} can only operate vector-vector-wise, i.e., only {\blas} level-1-type routines. The latter also applies to all other \texttt{ListVectorArray}s.

It is important to note that our implementations of {\rsCholQR} and {\pnCholQR}
utilize \texttt{VectorArray}s for the matrices associated with $A$ and $Q$, only. All other matrices, i.e., $R, B, X$ and $I$, are assumed to be small enough and stored as {\ndarray}s for which we can use {\blas} and {\lapack} routines.

Furthermore, algorithms {\isCholQR}, {\rsCholQR} and {\pnCholQR} all have a stopping criterion of \[\fnorm{I-Q\tra Q}\leq\e{-13},\] and we allow a maximum of \(10\) iterations (outer iterations for the {\pnCholQR}). Also, all employed algorithms work on a copy of the input matrix rather than in-place, since we are not able to exclude the copy time of the {\scipyQR} function.

We generate our test matrices $A$ pseudo-randomly via a commonly found SVD approach ($A:=U\Sigma V$). Hereby, we construct two random orthogonal matrices $U\in\R[m][n], V\in\R[n][n]$ and an ill-conditioned diagonal matrix $\Sigma\in\R[n][n]$. The values in $\Sigma$ are log-equidistant starting from \(1\) up to the desired matrix condition number $\cond{A}=\frac{\sigma_\mathrm{max}}{\sigma_\mathrm{min}}$. The seed used for the pseudo-random number generator is varied with the matrix dimensions and the condition number. The test matrix and all intermediate matrices are of data type IEEE-754 double precision, i.e., \(64\)-bit floating point numbers, hence \(\uro\approx\e{-16}[1.11]\).

The following metrics are measured and evaluated:
\begin{description}
    \item[Runtime] We measure the wall-time of a QR decomposition call in seconds and evaluate the median of multiple repetitions. The number of repetitions is specified in the caption of each figure.
    \item[Loss of orthogonality]
      \[\displaystyle\err{LOO}(Q)=\norm{I-Q\tra Q}\]
    \item[Relative reconstruction residual]
      \[\displaystyle\err{RRR}(A,Q,R)=\frac{\norm{A-QR}}{\norm{A}}\]
    \item[Relative Cholesky residual]
      \[\displaystyle\err{RCR}(A,R)=\frac{\norm{A\tra A-R\tra R}}{\norm{A}^2}\]
\end{description}

\subsection{An overview of Cholesky~QR variants}%
\label{cha:bench_chol}
In this subsection, we perform a comparison between existing Cholesky~QR variants and {\rsCholQR}. We use small matrices of size $A\in\R[300][10]$ with ever-increasing matrix condition numbers, mimicking the numerical experiment in \cite[Fig. 6.4]{fukaya2020}. This is done in order to reduce theoretical restrictions on the shift. Prior to conducting the comparison, it is necessary to consider the following aspects and assumptions:
\begin{enumerate}
    \item Since {\rsCholQR} is implemented in {\pymor}, it has a higher overhead due to the additional function layer of the \texttt{VectorArray} interface. In order to implement the algorithm, our current implementation additionally requires many delete/copy calls instead of truly working in-place, which degrades the performance further.
    \item Given the norm equivalence \[\fnorm{Q\tra Q-I_n}\leq\norm{Q\tra Q-I_n}\leq \sqrt{n}\fnorm{Q\tra Q-I_n}\] between the spectral and Frobenius norm and the stopping criterion $\fnorm{Q\tra Q-I_n}\leq \e{-13}$ for {\isCholQR} and {\rsCholQR}. The stopping criterion in spectral norm is bounded from above by \[\norm{Q\tra Q-I_n}\leq \sqrt{n}\fnorm{Q\tra Q-I_n}\approx\e{-13}[3.16],\] which is represented by the red, dashed line in \Cref{fig:chol_loo}.
    \item Numerical results depend on the used hardware and even a use of virtual environments cannot
      entirely remedy this fact. This phenomenon can be
      observed by different failing behavior of {\sCholQR} and {\isCholQR} on
      the server and notebook for $\cond{A}\geq\e{-16}$. In our case, it might be
      caused by a different execution order of operations, i.e., error
      perturbation; e.g., due to a different tiling scheme of the
      underlying {\openblas} caused by varying cache sizes or the number of cores available.
\end{enumerate}

A major factor for the success, i.e., a low \err{LOO}, of Cholesky~QR algorithms is
the matrix condition number of the input matrix $A$, since it is effectively
squared for the Gramian $X=A\tra A$. As a result, the simple {\cholQR} has high
errors and {\cholQRtwo} fails to compute the Cholesky decomposition usually for
$\cond{A}\geq\sqrt{\uro\inverse}$. As can be seen, {\sCholQR} and {\isCholQR} are numerically robust for $\cond{A}\leq\e{15}$. For even higher condition numbers, they become unreliable and might fail\footnote{Here, it strongly depends on the seed when {\sCholQR} and {\isCholQR} fails. We have noticed that for the given test matrix construction and size both of them struggle for $\cond{A}\geq\e{16}$}. {\rsCholQR}, on the other hand, is able to compute the QR decomposition for matrices with such high condition numbers and that even for larger matrices, as we will see in \Cref{cha:bench_runtimes,cha:bench_errors}. However, this comes at the cost of runtime, since the norm of the Gramian has to be computed potentially multiple times. Again, we would like to point out that the runtime is negatively influenced by the \texttt{VectorArray} interface.
\subsection{Analysis of numerical errors}%
\label{cha:bench_errors}
In \Cref{fig:chol_figure}, we illustrate the numerically robustness of {\rsCholQR} and {\pnCholQR} ($r=2,\ldots,5$). Here, we only show our measurements based on a varying matrix condition number. More quality measures for the variation of different aspects of the original matrix can be found in our repository~\cite{bindhak2025}.

Due to the essence of the Cholesky~QR algorithms ($Q:=AR\inverse$), we can
expect low relative reconstruction residual errors \err{RRR}. Nonetheless, the orthogonality of \(Q\) is fully dependent on the quality of $R$, which explains the importance of including the relative Cholesky residuals \err{RCR} in our measurements. The errors of the Cholesky~QR variants are comparable to {\scipyQR} errors.

The increase in \err{RRR} for {\mgs} for high condition numbers is due to its
rank-revealing features, which are active by default. Some vectors are linearly dependent, which causes them to be dropped. Therefore, the original
matrix $A$ can only be reconstructed with higher errors. At the same time, this is also the cause for its slight speedup in \Cref{fig:runtime_cond}.
\subsection{Runtime measurements}%
\label{cha:bench_runtimes}
In the following we discuss our runtime measurements and illustrate the dependence on a multitude of factors. We investigate the changes in runtime depending on matrix properties like the dimension $\VA\in\R[m][n]$, but also the matrix condition number $\cond{A}$. Furthermore, as already mentioned, we use two different hardware systems to observe potential differences in the behavior of the algorithms. Also, since this work is done for the {\pymor} context, we look into different \texttt{VectorArray}s. Due to the limited size of the notebook's main memory, some measurements in \Cref{fig:runtime_m,fig:runtime_n} terminate for $m\geq \e{6.48}$ and $n\geq 250$, respectively.

The main takeaway of \Cref{fig:runtime_cond} is that, in contrast to {\scipyQR}, the runtime of the algorithms does not only depend on the matrix dimensions, but depends on the matrix condition number. The runtime of the Cholesky~QR variants ({\rsCholQR} and {\pnCholQR}) increases for higher condition numbers, since more iterations and shift recomputations have to be performed in order to achieve orthogonality. This is important to note, since we use $\cond{A}=\e{20}$ in \Cref{fig:runtime_m,fig:runtime_n,fig:runtime_va_hcond}. The improved robustness of {\rsCholQR} is paid by a factor of $3$ and $3.58$ in runtime for high condition numbers, compared to low condition numbers on the server and notebook, respectively. Furthermore, {\pnCholQR} has a much steeper increase in runtime for $\cond{A}\leq\e{4}$, but only slowly increases thereafter. While on the server hardware all competitors are significantly faster than {\pymor}'s default (\mgs) and of rather similar performance among each other, on the notebook the Cholesky~QR variants can even beat the computation times of {\scipyQR} by a margin, with our newly suggested {\rsCholQR} clearly fastest for all condition numbers.

In \Cref{fig:runtime_m}, we consider a constant number of vectors ($n=100$), but vary its length $m$. For better visibility, we depict the runtime in relation to $m$. Roughly speaking, QR decompositions are of the time complexity class $\Ooff{mn^2}$. The resulting line plot can be interpreted as the effective constant of the term $mn^2$. As one can see, this constant varies depending on $m$ (the tall matrix direction). In comparison to the experiments above, {\scipyQR} faces more memory management and movement. In particular one can observe this for $m\geq\e{4.5}$ on the notebook.

\Cref{fig:runtime_n} illustrates the runtime over a varying number of vectors $n$. Here, {\pnCholQR} scales consecutively worse for an increasing amount of panels $r$, where {\rsCholQR}, i.e., $r=1$, has the lowest runtime out of the Cholesky variants. {\mgs}, being a Gram-Schmidt based algorithm, has a significant increase in runtime for larger $n$. Lastly, the runtime of {\scipyQR} varies the most between the used hardware. On the server it has consecutively the lowest runtime, but on the notebook it performs even worse than {\pnCholQR[5]}. However, it is important to note that the Gramian has an insignificant size in all measurements and fits into the caches, which makes it cheap to work with.

In \Cref{fig:runtime_many_n} we repeat the test with a lower condition number $\cond{A}=\e{5}$, but up to $n=10\,000$ vectors. Here, we can observe that up to $n\leq3\,500$ the panel variants are similar in runtime to {\scipyQR}. {\rsCholQR}, on the other hand, performs much better compared to {\pnCholQR}, due to the reduced condition number (c.f. \Cref{fig:runtime_cond}). Starting with $n\geq 4\,000$ vectors the panel variants are faster than {\scipyQR} and keep a lower increase rate. Note, that the panel variants have an uneven increase in runtime for an increasing amount of vectors. Furthermore, the runtime even decreases in some cases, e.g., {\pnCholQR[5]} for $n=5\,000$ compared to $n=4\,500$. Note, many numerical solvers, like {\openblas}, use tiling schemes with powers of two for matrices. Therefore, we suspect, that the size of the Gramians is more suited. {\pnCholQR} can be derived to use panels of said sizes. Starting with $n\geq 9\,000$ vectors, the runtime of {\rsCholQR} increases at a considerably higher rate, which might be caused by the amount of data of the Gramian. For more than $n\geq 10\,000$ vectors, its runtime might even fall above the runtime of {\pnCholQR}. The L3 caches of the used server have a total size of \(256\)~MB, which means that they are unable to store the full Gramian for $n\geq5\,657$, making it evermore expensive to compute its norm and Cholesky decomposition. However, this is not as visible, since a lower matrix condition number is being used.

In \Cref{fig:runtime_va_lcond,fig:runtime_va_hcond}, we looked into the runtime
behavior depending on the \texttt{VectorArray} backend. {\scipyQR} is shown
in the category \texttt{NumpyVectorArray} but works on a {\ndarray}. Since
the {\pymor} algorithms can only work vector-wise with
\texttt{ListVectorArray}s, we can see a considerable increase in runtime for the
Cholesky~QR variants. This is due to the overhead in the Gramian construction,
which could previously be computed by use of {\gemm}, i.e., in level-3-type \blas{},
and now falls back to level-1-type \blas{}. However, here, the panel variants can show
their benefits. For the \texttt{NumpyListVectorArray} and
\texttt{NGSolveListVectorArray}, the expected speedup of two (see
\Cref{cha:flopCount}) for an increasing number of panels is approached. Nevertheless, the
runtime is larger compared to {\mgs}. The \texttt{FEniCSListVectorArray}
implementation, on the other hand, behaves notably different, where more than
two panels do not provide a meaningful speedup. However, {\mgs} does in fact
gain a substantial speedup when using {\fenics}. Note however, that the runtime
advantage of {\mgs} shrinks drastically for moderate condition numbers
(\Cref{fig:runtime_va_lcond}) and is most present when many vectors turn out to
be linearly dependent ($\kappa_2(A)=10^{20}\gg \frac{1}{\uro}$ in \Cref{fig:runtime_va_hcond}).
%
% LocalWords:  FEniCSListVectorArray VectorArray NumpyVectorArray
% LocalWords:  ListVectorArray
%
%% Local Variables:
%% jinx-languages: "en_US"
%% End:

\section{Conclusions and Outlook}%
\label{cha:outlook}

Our proposed {\rsCholQR} and {\pnCholQR} are numerically robust even for high matrix
condition numbers (even $\kappa_2(A)=10^{20}\gg \frac{1}{\uro}$), making them an improvement compared to {\sCholQR} and
{\isCholQR}, which are working well for $\kappa_2(A)<\frac{1}{\uro}$. Additionally, depending on hardware and matrix properties
(condition number and dimensions), they can be faster than {\scipyQR}, and a similar
advantage is also seen with {\sCholQR} and {\isCholQR}. Even when using the
{\pymor} interface, lower runtimes have been observed, suggesting that dedicated
reimplementations of these algorithms may be of interest. Since our
suggestions seem to be most robust and do not lose too much performance
compared to the others, in situations where the condition number is unknown we
recommend their use. Whenever expected condition numbers are below
\(\frac{1}{\uro}\), {\sCholQR} and {\isCholQR} are still the better choices, and
even {\cholQR} may be a excellent (and fast) solution for $\kappa_2(A)<\frac{1}{\sqrt{\uro}}$.

However, there are still open questions that may help to improve our algorithmic
variants. Regarding the optimal number of panels
for {\pnCholQR}, or the best panel width, respectively. Both depend on the
number of vectors \(n\), additional research is needed.
As mentioned before, using a width of a  power of two could be beneficial for the used numerical solvers.
In theory {\pnCholQR} reduces the required flops by up to one half compared to
{\rsCholQR}, however memory access patterns require closer investigation for the runtime
optimization.
Better shifting strategies and handling of linear dependencies are also areas for improvement,
as they could potentially reduce the number of inner iterations.

Finally, it is worth noting that for the usage with ill-conditioned \texttt{VectorArray}s in
{\pymor}, our Cholesky~QR variant is currently only suitable, when used in
combination with the
\texttt{NumpyVectorArray}. Otherwise, the default rank-revealing
features of the current repeated modified Gram-Schmidt implementation are presenting an advantage that our current
formulations do not have. The use of rank-revealing computations around the
Gramian matrices is necessary for which the idea of pivoted Cholesky~QR, as
proposed in~\cite{fukaya2024}, is a promising approach.
%
%% Local Variables:
%% jinx-languages: "en_US"
%% End:

\section*{Acknowledgement}
  The work of Art J. R. Pelling was funded by the Deutsche Forschungsgemeinschaft (DFG, German Research Foundation), project number \href{https://gepris.dfg.de/gepris/projekt/504367810?language=en}{504367810}.
  The work of Maximilian Bindhak has been supported by MaRDI, funded by the Deutsche Forschungsgemeinschaft (DFG, German Research Foundation), project number \href{https://gepris.dfg.de/gepris/projekt/460135501?language=en}{460135501}, NFDI 29/1 “MaRDI – Mathematische Forschungsdateninitiative”.

\addcontentsline{toc}{section}{References}
\bibliographystyle{siamplain}
\bibliography{refs}

\begin{thebibliography}{10}

\bibitem{AlnaesEtal2015}
{\scshape M.~S. Alnaes, J.~Blechta, J.~Hake, A.~Johansson, B.~Kehlet, A.~Logg, C.~N. Richardson, J.~Ring, M.~E. Rognes, and G.~N. Wells}, {\em The {FEniCS} project version 1.5}, Archive of Numerical Software, 3 (2015), \url{https://doi.org/10.11588/ans.2015.100.20553}.

\bibitem{antoulas2005}
{\scshape A.~C. Antoulas}, {\em Approximation of Large-Scale Dynamical Systems}, Advances in Design and Control, {Society for Industrial and Applied Mathematics}, Philadelphia, 2005.

\bibitem{arpack-ng}
{\scshape {ARPACK-NG contributors}}, {\em {ARPACK-NG}}, \url{https://github.com/opencollab/arpack-ng}.

\bibitem{benner2017}
{\scshape P.~Benner, A.~Cohen, M.~Ohlberger, and K.~Willcox}, eds., {\em Model {{Reduction}} and {{Approximation}}: {{Theory}} and {{Algorithms}}}, no.~15 in Computational Science and Engineering, {Society for Industrial and Applied Mathematics}, Philadelphia, 2017, \url{https://doi.org/10.1137/1.9781611974829}.

\bibitem{morBenGQetal21}
{\scshape P.~Benner, S.~Grivet-Talocia, A.~Quarteroni, G.~Rozza, W.~H.~A. Schilders, and L.~M. Silveira}, eds., {\em {Model Order Reduction. Volume 1: System- and Data-Driven Methods and Algorithms}}, De~Gruyter, Berlin, 2021, \url{https://doi.org/10.1515/9783110498967}.

\bibitem{morBenGQetal21a}
{\scshape P.~Benner, S.~Grivet-Talocia, A.~Quarteroni, G.~Rozza, W.~H.~A. Schilders, and L.~M. Silveira}, eds., {\em {Model Order Reduction. Volume 2: Snapshot-Based Methods and Algorithms}}, De~Gruyter, Berlin, 2021, \url{https://doi.org/10.1515/9783110671490}.

\bibitem{morBenGQetal21b}
{\scshape P.~Benner, S.~Grivet-Talocia, A.~Quarteroni, G.~Rozza, W.~H.~A. Schilders, and L.~M. Silveira}, eds., {\em {Model Order Reduction. Volume 3: Applications}}, De~Gruyter, Berlin, 2021, \url{https://doi.org/10.1515/9783110499001}.

\bibitem{bindhak2025}
{\scshape M.~Bindhak}, {\em {Code and data - Towards an Efficient Shifted Cholesky-QR for Applications in Model Order Reduction using pyMOR}}, 2025, \url{https://doi.org/10.5281/zenodo.15729514}.

\bibitem{bischof1987}
{\scshape C.~Bischof and C.~Van~Loan}, {\em The {{WY Representation}} for {{Products}} of {{Householder Matrices}}}, {SIAM} J. Sci. Statist. Comput., 8 (1987), pp.~s2--s13, \url{https://doi.org/10.1137/0908009}.

\bibitem{lawn41}
{\scshape S.~Blackford and J.~Dongarra}, {\em {LAPACK Working Note 41}}.
\newblock \url{https://netlib.org/lapack/lawnspdf/lawn41.pdf}, June 1999.
\newblock Version 3.0.

\bibitem{CarLMetal25}
{\scshape E.~Carson, K.~Lund, Y.~Ma, and E.~Oktay}, {\em Reorthogonalized pythagorean variants of block classical {G}ram-{S}chmidt}, {SIAM} J. Matrix Anal. Appl., 46 (2025), pp.~310--340, \url{https://doi.org/10.1137/24M1658723}.

\bibitem{CarLR21}
{\scshape E.~Carson, K.~Lund, and M.~Rozlo{\v{z}}n{\'{i}}k}, {\em The stability of block variants of classical {Gram}-{Schmidt}}, {SIAM} J. Matrix Anal. Appl., 42 (2021), pp.~1365--1380, \url{https://doi.org/10.1137/21M1394424}.

\bibitem{carson2022}
{\scshape E.~Carson, K.~Lund, M.~Rozlo{\v z}n{\'i}k, and S.~Thomas}, {\em Block {{Gram-Schmidt}} algorithms and their stability properties}, Linear Algebra Appl., 638 (2022), pp.~150--195, \url{https://doi.org/10.1016/j.laa.2021.12.017}.

\bibitem{harris2020array}
{\scshape {Charles R. Harris and K. Jarrod Millman and St{\'{e}}fan J. van der Walt and Ralf Gommers and Pauli Virtanen and David Cournapeau and Eric Wieser and Julian Taylor and Sebastian Berg and Nathaniel J. Smith and Robert Kern and Matti Picus and Stephan Hoyer and Marten H. van Kerkwijk and Matthew Brett and Allan Haldane and Jaime Fern{\'{a}}ndez del R{\'{i}}o and Mark Wiebe and Pearu Peterson and Pierre G{\'{e}}rard-Marchant and Kevin Sheppard and Tyler Reddy and Warren Weckesser and Hameer Abbasi and Christoph Gohlke and Travis E. Oliphant}}, {\em Array programming with {NumPy}}, Nature, 585 (2020), pp.~357--362, \url{https://doi.org/10.1038/s41586-020-2649-2}.

\bibitem{morChe22}
{\scshape S.~Chellappa}, {\em A posteriori Error Estimation and Adaptivity for Model Order Reduction of Large-Scale Systems}, {D}issertation, Otto-von-Guericke-Universit{\"{a}}t, Madgeburg, Germany, 2022, \url{https://doi.org/10.25673/101396}.

\bibitem{conda}
{\scshape {conda contributors}}, {\em {conda: A system-level, binary package and environment manager running on all major operating systems and platforms.}}, \url{https://github.com/conda/conda}.

\bibitem{DemGHetal12}
{\scshape J.~Demmel, L.~Grigori, M.~Hoemmen, and J.~Langou}, {\em Communication-optimal parallel and sequential {QR} and {LU} factorizations}, {SIAM} J. Sci. Comput., 34 (2012), pp.~A206--A239, \url{https://doi.org/10.1137/080731992}.

\bibitem{morFre03}
{\scshape R.~W. Freund}, {\em Model reduction methods based on {K}rylov subspaces}, Acta Numer., 12 (2003), pp.~267--319, \url{https://doi.org/10.1017/S0962492902000120}.

\bibitem{pymor24.2}
{\scshape R.~Fritze, S.~Rave, F.~Schindler, P.~Mlinarić, L.~Balicki, and H.~Kleikamp}, {\em {pyMOR}}, Dec. 2024, \url{https://doi.org/10.5281/zenodo.14536112}.

\bibitem{Fuk19}
{\scshape T.~Fukaya}, {\em {An investigation into the impact of the structured QR kernel on the overall performance of the TSQR algorithm}}, in Proceedings of the International Conference on High Performance Computing in Asia-Pacific Region, HPCAsia '19, New York, NY, USA, 2019, Association for Computing Machinery, p.~81–90, \url{https://doi.org/10.1145/3293320.3293327}.

\bibitem{fukaya2020}
{\scshape T.~Fukaya, R.~Kannan, Y.~Nakatsukasa, Y.~Yamamoto, and Y.~Yanagisawa}, {\em Shifted {{Cholesky QR}} for {{Computing}} the {{QR Factorization}} of {{Ill-Conditioned Matrices}}}, {SIAM} J. Sci. Comput., 42 (2020), pp.~A477--A503, \url{https://doi.org/10.1137/18M1218212}.

\bibitem{fukaya2024}
{\scshape T.~Fukaya, Y.~Nakatsukasa, and Y.~Yamamoto}, {\em A {{Cholesky QR}} type algorithm for computing tall-skinny {{QR}} factorization with column pivoting}, in 2024 {{IEEE International Parallel}} and {{Distributed Processing Symposium}} ({{IPDPS}}), IEEE, May 2024, pp.~63--75, \url{https://doi.org/10.1109/IPDPS57955.2024.00015}.

\bibitem{fukaya2014}
{\scshape T.~Fukaya, Y.~Nakatsukasa, Y.~Yanagisawa, and Y.~Yamamoto}, {\em {{CholeskyQR2}}: {{A Simple}} and {{Communication-Avoiding Algorithm}} for {{Computing}} a {{Tall-Skinny QR Factorization}} on a {{Large-Scale Parallel System}}}, in 2014 5th {{Workshop}} on {{Latest Advances}} in {{Scalable Algorithms}} for {{Large-Scale Systems}}, New Orleans, LA, USA, Nov. 2014, IEEE, pp.~31--38, \url{https://doi.org/10.1109/ScalA.2014.11}.

\bibitem{GolV13}
{\scshape G.~H. Golub and C.~F. {Van~Loan}}, {\em Matrix Computations}, Johns Hopkins Studies in the Mathematical Sciences, Johns Hopkins University Press, Baltimore, fourth~ed., 2013.

\bibitem{morGri97}
{\scshape E.~J. Grimme}, {\em {K}rylov projection methods for model reduction}, {Ph.D. Thesis}, Univ. of Illinois at Urbana-Champaign, USA, 1997, \url{https://perso.uclouvain.be/paul.vandooren/ThesisGrimme.pdf}.

\bibitem{gugercin2008}
{\scshape S.~Gugercin, A.~C. Antoulas, and C.~Beattie}, {\em H2 {{Model Reduction}} for {{Large-Scale Linear Dynamical Systems}}}, {SIAM} J. Matrix Anal. Appl., 30 (2008), pp.~609--638, \url{https://doi.org/10.1137/060666123}.

\bibitem{lanczos50}
{\scshape C.~Lanczos}, {\em An iteration method for the solution of the eigenvalue problem of linear differential and integral operators}, J. Research Nat. Bur. Standards, 45 (1950), pp.~255--282.

\bibitem{lehoucq98}
{\scshape R.~B. Lehoucq, D.~C. Sorensen, and C.~Yang}, {\em ARPACK Users' Guide}, Society for Industrial and Applied Mathematics, 1998, \url{https://doi.org/10.1137/1.9780898719628}.

\bibitem{LoggEtal2012}
{\scshape A.~Logg, K.-A. Mardal, G.~N. Wells, et~al.}, {\em Automated Solution of Differential Equations by the Finite Element Method}, Springer, 2012, \url{https://doi.org/10.1007/978-3-642-23099-8}.

\bibitem{milk2016}
{\scshape R.~Milk, S.~Rave, and F.~Schindler}, {\em {{pyMOR}} -- {{Generic Algorithms}} and {{Interfaces}} for {{Model Order Reduction}}}, {SIAM} J. Sci. Comput., 38 (2016), pp.~S194--S216, \url{https://doi.org/10.1137/15M1026614}.

\bibitem{miniforge}
{\scshape {Miniforge contributors}}, {\em {Miniforge}}, \url{https://github.com/conda-forge/miniforge}.

\bibitem{minster2021}
{\scshape R.~Minster, A.~K. Saibaba, J.~Kar, and A.~Chakrabortty}, {\em Efficient {{Algorithms}} for {{Eigensystem Realization Using Randomized SVD}}}, {SIAM} J. Matrix Anal. Appl., 42 (2021), pp.~1045--1072, \url{https://doi.org/10.1137/20M1327616}.

\bibitem{ngsolve}
{\scshape {NGSolve contributors}}, {\em {NGSolve}}, \url{https://github.com/NGSolve/ngsolve}.

\bibitem{openblas}
{\scshape {OpenBLAS contributors}}, {\em {OpenBLAS}}, \url{https://github.com/OpenMathLib/OpenBLAS}.

\bibitem{pelling2025}
{\scshape A.~J.~R. Pelling and E.~Sarradj}, {\em Adaptive {{Reduced Order Modelling}} of {{Discrete-Time Systems}} with {{Input-Output Dead Time}}}, arXiv.org,  (2025), \url{https://arxiv.org/abs/2506.08870}.

\bibitem{pymor}
{\scshape pyMOR developers and contributors.}, {\em {pyMOR -- Model Order Reduction with Python}}, \url{https://pymor.org}.

\bibitem{Saa09}
{\scshape J.~Saak}, {\em Efficient Numerical Solution of Large Scale Algebraic Matrix Equations in {PDE} Control and Model Order Reduction}, {D}issertation, Technische Universit{\"{a}}t Chemnitz, Chemnitz, Germany, July 2009, \url{http://nbn-resolving.de/urn:nbn:de:bsz:ch1-200901642}.

\bibitem{schoberl1997}
{\scshape {Sch{\"o}berl, Joachim}}, {\em {NETGEN} an advancing front {2D/3D-mesh} generator based on abstract rules}, {Computing and Visualization in Science}, 1 (1997), pp.~41--52.

\bibitem{schreiber1989}
{\scshape R.~Schreiber and C.~Van~Loan}, {\em A {{Storage-Efficient}} \${{WY}}\$ {{Representation}} for {{Products}} of {{Householder Transformations}}}, {SIAM} J. Sci. Statist. Comput., 10 (1989), pp.~53--57, \url{https://doi.org/10.1137/0910005}.

\bibitem{morSir87}
{\scshape L.~Sirovich}, {\em Turbulence and the dynamics of coherent structures. parts {I}-{III}}, Quart. Appl. Math., 45 (1987), pp.~561--590, \url{http://www.jstor.org/stable/43637457}.

\bibitem{sorensen97}
{\scshape D.~C. Sorensen}, {\em Implicitly Restarted Arnoldi/Lanczos Methods for Large Scale Eigenvalue Calculations}, Springer Netherlands, Dordrecht, 1997, pp.~119--165, \url{https://doi.org/10.1007/978-94-011-5412-3_5}.

\bibitem{trefethen1997}
{\scshape L.~N. Trefethen and D.~Bau, III}, {\em Chapter {{II}}: {{QR Factorization}} and {{Least Squares}}}, in Numerical {{Linear Algebra}}, Other {{Titles}} in {{Applied Mathematics}}, {Society for Industrial and Applied Mathematics}, Jan. 1997, pp.~41--85, \url{https://doi.org/10.1137/1.9780898719574.ch2}.

\bibitem{2020SciPy-NMeth}
{\scshape P.~Virtanen, R.~Gommers, T.~E. Oliphant, M.~Haberland, T.~Reddy, D.~Cournapeau, E.~Burovski, P.~Peterson, W.~Weckesser, J.~Bright, S.~J. {van der Walt}, M.~Brett, J.~Wilson, K.~J. Millman, N.~Mayorov, A.~R.~J. Nelson, E.~Jones, R.~Kern, E.~Larson, C.~J. Carey, {\.I}.~Polat, Y.~Feng, E.~W. Moore, J.~{VanderPlas}, D.~Laxalde, J.~Perktold, R.~Cimrman, I.~Henriksen, E.~A. Quintero, C.~R. Harris, A.~M. Archibald, A.~H. Ribeiro, F.~Pedregosa, P.~{van Mulbregt}, and {SciPy 1.0 Contributors}}, {\em {{SciPy} 1.0: Fundamental Algorithms for Scientific Computing in Python}}, Nature Methods, 17 (2020), pp.~261--272, \url{https://doi.org/10.1038/s41592-019-0686-2}.

\end{thebibliography}
\appendix%
\begin{table*}[tp]
    \rowcolors{2}{white}{gray!25!white}
    % \centering
    \begin{tabular}{|l|l|l|}\hline
        Operation     & Routine                                 & Number Operations                                 \\\hline
        $B\cdot C$    & \gemm                                   & $2ijk$                                            \\
        $B\cdot R$    & {\trmm} (Mult. with $R$ from the right) & $ij^2$                                            \\
        $R\cdot C$    & {\trmm} (Mult. with $R$ from the left)  & $kj^2$                                            \\
        $\chol(A)$    & \potrf                                  & $\frac{1}{3}n^3 + \frac{1}{2}n^2 + \frac{1}{6}n$  \\
        $\inv(R)$     & \trtri                                  & $\frac{1}{3}n^3 + \frac{2}{3}n$                   \\
        $\norm{X}$    & \texttt{scipy.linalg.eigsh}             & $\Ooff{n^2}$                                      \\
        $\fnorm{A}$   & \texttt{scipy.linalg.norm(X, ord='fro')}& $2n^2+n$                                          \\\hline
    \end{tabular}
    \caption{FLOP count of used routines.}%
    \label{tab:routinesFlop}
\end{table*}
\begin{table*}[tp]
    \rowcolors{2}{white}{gray!25!white}
    % \centering
    \begin{tabular}{|l|l|l|}\hline
        Line    & Routine                       & Number Operations                                                         \\\hline
        3       & {\gemm}                       & $2mn^2$                                                                   \\
        4       & \texttt{scipy.linalg.norm}    & $\left(x+1\right)\cdot\left[2n^2+n\right]$                                \\
        5       & {\potrf}                      & $x\cdot\left[\frac{1}{3}n^3 + \frac{1}{2}n^2 + \frac{1}{6}n\right]$       \\
        6       & \texttt{scipy.linalg.eigsh}   & $x\cdot\Ooff{n^2}$                                                        \\
        7       & {\potrf}                      & $x\cdot\left[\frac{1}{3}n^3 + \frac{1}{2}n^2 + \frac{1}{6}n + n\right]$   \\
        9       & {\trtri} + {\gemm}            & $x\cdot\left[\frac{1}{3}n^3 + \frac{2}{3}n + 2mn^2\right]$                \\
        10      & {\trmm}                       & $x\cdot n^3$                                                              \\
        11      & {\gemm}                       & $x\cdot 2mn^2$                                                            \\\hline
        Total:  &                               & $=2 m n^{2} + 2 n^{2} + n$                                                \\
                &                               & $+x\cdot\left[4 m n^{2} + 2 n^{3} + 3 n^{2} + \Ooff{n^2} + 3 n\right]$    \\\hline
    \end{tabular}
    \caption{Operation count for {\rsCholQR} as defined in
      \Cref{algo:cholqr}. Under the assumption that the outer loop iterates $x$
      times (condition is evaluated $x+1$ times).}%
    \label{tab:rsCholQRFlop}
\end{table*}
\begin{table*}[tp]
    \rowcolors{2}{white}{gray!25!white}
    % \centering
    \begin{tabular}{|l|l|l|}\hline
        Line    & Routine                       & Number Operations                                                                                                                 \\\hline
        4       & {\gemm}                       & $2 m q p$                                                                                                                         \\
        5       & 2 {\gemm}                     & $2 m p^{2} + 2 q p^{2} + p^{2}$                                                                                                   \\
        6       & \texttt{scipy.linalg.norm}    & $\left(x + 1\right)\cdot \left[2 p^{2} + p\right]$                                                                                \\
        8       & {\potrf}                      & $x\cdot \left(y + 1\right) \left[\frac{1}{3}p^{3} + \frac{1}{2}p^{2} + \frac{7}{6}p\right]$                                       \\
        9       & \texttt{scipy.linalg.eigsh}   & $x\cdot \left[\Ooff{p^{2}} + y - 1\right]$                                                                                        \\
        12      & 2 {\gemm} + {\trtri}          & $x\cdot \left[2 m p^{2} + 2 m q p + m p + \frac{1}{3}p^{3} + \frac{2}{3}p\right]$                                                 \\
        13      & {\gemm}                       & $x\cdot \left[2 q p^{2} + q p\right]$                                                                                             \\
        14      & {\trmm}                       & $x\cdot p^{3}$                                                                                                                    \\
        15      & {\gemm}                       & $x\cdot 2 m q p$                                                                                                                  \\
        16      & 2 {\gemm}                     & $x\cdot \left[2 m p^{2} + 2 q p^{2} + p^{2}\right]$                                                                               \\\hline
        Total:  &                               & $=2mp^{2} + 2mqp + 2qp^{2} + 3p^{2} + p$                                                                                          \\
                &                               & $+x\cdot\left[4mp^{2} + 4mqp + mp + \frac{5}{3}p^{3} + 4qp^{2} + \frac{7}{2}p^{2} + \Ooff{p^{2}} + qp + \frac{17}{6}p - 1\right]$ \\
                &                               & $+\left(y_1+\ldots+y_x\right)\cdot\left[\frac{1}{3}p^{3} + \frac{1}{2}p^{2} + \frac{7}{6}p + 1\right]$                            \\\hline
    \end{tabular}
    \caption{Operation count for {\cholQRUpdate} as defined in
      \Cref{algo:cholqrupdate}. Under the assumption that the outer loop
      iterates $x$ times (condition is evaluated $x+1$ times) and that the inner
      loop iterates $y_1,y_2,\ldots,y_x$ times (condition is evaluated $y+1$
      times).}%
    \label{tab:cholqrupdateFlop}
\end{table*}
\begin{table*}[tp]
    \rowcolors{2}{white}{gray!25!white}
    % \centering
    \begin{tabular}{|l|l|}\hline
                & Number Operations\\\hline
        Total:  & $=mn^{2} + \frac{mn^{2}}{r} + \frac{n^{3}}{r} - \frac{n^{3}}{r^{2}} + \frac{3n^{2}}{r} + n$\\
                & $+x\cdot\left[2mn^{2} + \frac{2mn^{2}}{r} + mn + \frac{2n^{3}}{r} - \frac{n^{3}}{3 r^{2}} + \frac{n^{2}}{2} + \frac{3n^{2}}{r} + \Ooff{\frac{n^{2}}{r}} + \frac{17n}{6} - r\right]$\\
                & $+xy\cdot\left[\frac{n^{3}}{3r^{2}} + \frac{n^{2}}{2 r} + \frac{7n}{6} + r\right]$\\\hline
    \end{tabular}
    \caption{Operation count for {\pnCholQR} as defined in
      \Cref{alg:panel_cqr}. Given $A\in\R[m][n]$, $0<r\leq n$ such that
      $\frac{n}{r}=p$, $r,p\in\mathbb{N}_{>0}$. Under the assumption that all
      outer and inner loops of a {\cholQRUpdate} call require $x$ and $y$
      iterations respectively. The assumption onto $x$ and $y$ is very
      pessimistic. In practice, the first few panels might require none or only
      a few repetitions. The last few panels, on the other hand, require many
      more repetitions. Furthermore, in terms of flops a failed Cholesky
      decomposition is counted as a full decomposition.}%
    \label{tab:panel_cqr}
\end{table*}
\begin{table*}[tp]
    \rowcolors{2}{white}{gray!25!white}
    % \centering
    \begin{tabular}{|l|l|}\hline
                                    & Number Operations                                                                                     \\\hline
        \pnCholQR                   & $mn^{2} + \frac{1}{r}mn^{2} + x\cdot \left[2mn^{2} + \frac{2}{r}mn^{2} + mn\right]$                   \\
        \rsCholQR                   & $2mn^{2} + x\cdot 4mn^{2}$                                                                            \\\hline
        $F(r)=$ {\pnCholQR} $/$ {\rsCholQR} & $\frac{1}{2} + \frac{1}{2r} + \frac{x}{4nx + 2n}$                                             \\\hline
    \end{tabular}
    \caption{Operation count ratio between {\rsCholQR} and {\pnCholQR}.}%
    \label{tab:rs_pn_flop_comp}
\end{table*}
\section{Number of Floating Point Operations Evaluation}%
\label{cha:flopCount}

In the following section we want to provide an overview of how many floating point operations an operation or routine requires. Only the number of additions and multiplications are listed. In the following we use {\blas} and {\lapack} style routine names for some operations. The routines are to represent the cost of the related operation. The flop count of these routines is documented in {\lapack}~Working~Note~41~\cite{lawn41}.
In \Cref{tab:routinesFlop} the following matrix definitions are being used: $A\in\R[n][n]$, $B\in\R[i][j]$, $C\in\R[j][k]$, upper triangular $R\in\R[j][j]$, symmetric $X\in\R[n][n]$.

Due to the limitations of the {\pymor} interface, it is not possible to evaluate $X:=Q\tra Q$ using the {\blas} routine \texttt{x(SY/HE)RK}. Furthermore, we also cannot evaluate $Q:=AR\inverse$ using {\trmm} for the multiplication with $R\inverse$ or \texttt{xTRSM} for directly solving the equation. In our implementations, for both cases, we use {\gemm} indirectly. We evaluated the following flop count based on {\gemm}.

In {\rsCholQR}, {\cholQRUpdate} and {\pnCholQR}, we compute the spectral norm of
the shift \[11(mn+n(n+1))\uro\norm{X}\] by use of the symmetric eigenvalue solver
routine \texttt{eigsh}, from \texttt{scipy.linalg}, which uses the Implicitly Restarted Lanczos
Method (IRLM) from {\arpackng}~\cite{arpack-ng} internally.
\begin{lstlisting}[caption={Used paramters for the \texttt{eigsh} function call.}, label={lst:eigsh},numbers=none]
norm = scipy.sparse.linalg.eigsh(
    X,
    k=1,
    tol=1e-2,
    return_eigenvectors=False,
    v0=np.ones([n])
)[0]
\end{lstlisting}
The used parameters
for the function call are documented in \Cref{lst:eigsh}. For the Cholesky QR we are only interested in the largest eigenvalue, which is why we set \texttt{k=1}. Furthermore, we only need a rough approximation of the eigenvalue, so setting \texttt{tol=1e-2} reduces the runtime of the routine.

Chapter 2.3.6 of the {\arpack} manual \cite{lehoucq98} states that approximately $(\textrm{ncv} - \textrm{nev})\cdot\text{cost matrix-vector-product}=(\textrm{ncv} - \textrm{nev})\cdot2n^2$ plus an additional $4n\cdot\textrm{ncv}(\textrm{ncv} - \textrm{nev})$ flops are required for the IRLM method. Here, $\textrm{ncv}$ and $\textrm{nev}$ mean, respectively, the number of Lanczos basis vectors that are being used through the course of the computation and the number of eigenvalues that are to be computed. Looking into the {\scipy} source code one can find that $\textrm{ncv}$ is evaluated to be $\textrm{ncv} = \min(n,\max(2k + 1, 20)) = \min(n,20)$. Therefore, approximately $38n^{2} + 1520n=\Ooff{n^2}$ flops are required to compute the largest eigenvalue.

\subsection{Number of flops comparison between {\rsCholQR} and {\pnCholQR}}%
\label{cha:rs_pn_flop_comp}
Given $A\in\R[m][n]$, $0<r\leq n$ such that $\frac{n}{r}=p$, $r,p\in\mathbb{N}_{>0}$. Under the assumption that all outer and inner loops of a {\cholQRUpdate} call require $x$ and $y$ iterations respectively. For simplification, we consider the case where $n^3 \ll m$, which means that  only terms with the variable $m$ are relevant for the comparison.
\begin{align}
    G(r) &= \lim_{n\rightarrow\infty} F(r)\quad(0\leq x<\infty)\nonumber\\
    &= \frac{1}{2} + \frac{1}{2r}\label{eq:Fr}
\end{align}
In \Cref{eq:Fr} one can see, that for large enough $n$ the term $\frac{x}{4nx + 2n}$ becomes insignificant. Furthermore, the function $G(r)$ approaches $\frac{1}{2}$ asymptotically for an increasing amount of panels, i.e., {\pnCholQR[n]} requires half as many flops as {\rsCholQR}. Intuitively, {\pnCholQR[1]} require as many flops as {\rsCholQR}.
%
%% Local Variables:
%% jinx-languages: "en_US"
%% End:

\ifthenelse{\value{todoCounter}=0}{%
  % do nothing
}{%
  \onecolumn{\hypersetup{pdfborder={0 0 0}}\listoftodos{}}
}
\end{document}